\newcommand{\VAL}{\operatorname{VAL}}
\newtheorem{lem}{Lemma}
\newtheorem{thm}{Theorem}
\begin{document}

\title{$PSL_2$ tropicalization and lines on surfaces}
\author[SHKOLNIKOV and PETROV]{Mikhail Shkolnikov, Peter Petrov}

\thanks{M.S. was supported by the Simons Foundation International grant no. 992227, IMI-BAS}

\address{\center{Institute of Mathematics and Informatics\hfill\break at the Bulgarian Academy of Sciences\hfill\break Akad. G. Bonchev St, Bl. 8, 1113 Sofia, Bulgaria}\vspace{8pt}}

\email{m.shkolnikov@math.bas.bg, pk5rov@gmail.com}

\begin{abstract}
    The paper is based on a talk given by the first author at the Gökova Geometry \& Topology conference in May 2024. The subject is an interplay between the ideas of tropical geometry and two-by-two matrices with an intention to explore new types of geometries. More concretely, the article gives a preliminary account for a non-abelian version of phase tropicalization for subvarieties of $PSL_2.$
\end{abstract}

\keywords{Phase tropicalization, hyperbolic 3-space, complex surfaces}

\maketitle

\section{Introduction} 
Tropical geometry is concerned with polyhedral spaces endowed with an integral affine structure and relations between them \cite{mikhalkin2009tropical}. Some authors \cite{maclagan2021introduction} choose to say that ``tropical geometry is a combinatorial shadow of algebraic geometry''. Indeed, in a number of setups, algebraic varieties (or their families) may cast a shadow that will have a tropical shape. Such a procedure is referred to as tropicalization. In the present article, we continue the study of similar type procedure initiated in \cite{mikhalkin2022non} and recently refined in \cite{shkolnikov2024introduction}.

We would like to remark that the quote above is not entirely accurate. First, there are complete tropical spaces that mimic the structure of non-algebraic complex manifolds, with most basic examples being generic tori and Hopf manifolds \cite{ruggiero2017tropical}. Second, there are symplectic aspects of the theory, allowing for example, to construct Lagrangian submanifolds starting with tropical curves \cite{mikhalkin2019examples} or tropical hypersurfaces \cite{matessi2021lagrangian}. Moreover, Gross-Siebert program puts tropical geometry at the core of the fundamental interplay between complex and symplectic geometries \cite{gross2011tropical}, namely the Mirror symmetry. 

On the other hand, it is certainly the straightest path that most introductions (such as the excellent ``Brief introduction to tropical geometry'' \cite{brugalle2015brief}) take, to imitate the spirit of classical algebraic geometry, where at least initially, one is interested in solutions of systems of polynomial equations, i.e. algebraic varieties. Speaking of solutions, we generally assume some arithmetic system, such as complex numbers, or some other ring or field. In tropical arithmetic, the numbers $\mathbb{T}$ are just real numbers together with $-\infty,$ which is the neutral element with respect to tropical addition, replaced now by the operation of taking the maximum. The role of multiplication is taken by the ordinary addition. These operations are degenerations of the classical ones by the procedure known as Litvinov-Maslov dequantization. 

Therefore, a tropical polynomial is a kind of convex piece-wise linear function. The notion of a root is also modified. Namely, we say that a tropical polynomial has a root at some point if the function is not smooth at it. This definition is best explained by redefining the operation of tropical addition to be multi-valued: $$``\alpha+\beta"=\begin{cases}\max(\alpha,\beta),\ \ \alpha\neq \beta\\
[-\infty,\alpha],\ \ \alpha=\beta.
\end{cases}
$$
With such an operation tropical numbers form what is known as a hyperfield \cite{krasner1956approximation,viro2010hyperfields} and the notion of zero locus of a polynomial is now the same as in classical algebraic geometry, i.e. the set of points where the polynomial attains the addition's neutral element. 

The motivation behind the above multi-valued refinement of the tropical addition can be explained as follows. Assume we have a real parameter $t\rightarrow\infty$ and take two families of complex numbers $A(t)$ and $B(t)$ such that the first asymptotic of each is known, i.e. $A(t)=at^\alpha+o(t^\alpha)$ and  $B(t)=bt^\beta+o(t^\beta).$ Then, the limit of $\log_{t}|A(t)+B(t)|$ is $\max(\alpha,\beta)$ if $\alpha\neq \beta,$ but can be anything less or equal otherwise. Taking such limits is the first instance of a tropicalization.


Remarkably, a tropical polynomial of degree $d>0$ in one variable always has $d$ roots counted with multiplicities, i.e. the fundamental theorem of algebra holds in tropical arithmetic, so $\mathbb{T}$ is algebraically closed. If  a tropical Laurent polynomial in two variables is given, it defines a tropical curve $C$ in $(\mathbb{T}^*)^2=\mathbb{R}^2.$ From the geometric perspective, $C$ is a planar graph with straight edges of rational slope equipped with integer weights, satisfying the balancing condition at every vertex. Many classical facts hold for tropical curves. For example, Bernstein's theorem on the number of intersections of two planar curves in given deformation classes holds, provided we understand how to define the multiplicity of each intersection point. By understanding how tropical curves arise as limits of complex curves one is able to deduce analogous fact in algebraic geometry.



Before passing to the main geometric narrative of the paper, we would like to mention a few applications of tropical mathematics to other fields. 

Another important direction of investigation gives a general frame for developing tropical differential equations by tropicalizing the equations and their solutions. It is based on idempotent semirings and an idempotent version of differential algebra \cite{giansiracusa2024general}.

In combinatorics, parametrized versions of some classical algorithms for computing shortest-path trees have been expressed in tropical terms \cite{joswig2022parametric}. 

In statistics, there are models expressing  probability distributions as polynomial relations, turning to algebraic varieties, so they could be tropicalized (see \cite{pachter2005algebraic}). Tropicalized statistical models are fundamental for parametric inference, that is when analyzing the behavior of inference algorithms under the variation of model parameters. Inference is the evaluation of one or more coordinates of a single point on an algebraic variety, either over a field or over tropical semi-field.  

 For instance, the theoretical formulations of statistical mechanics appear to be naturally suited for tropical expression. For example taking the tropical limit of the Schrödinger equation leads to the classical Hamilton–Jacobi equation, the procedure being known in the mathematical literature as Maslov dequantization (see\cite{litvinov1996idempotent,kolokoltsov2013idempotent}), which may be reminiscent of the WKB approximation.

In biology, an application of tropical geometry leads to simplifying complex biological models. Its methods exploit a feature of biological systems called multiscaleness \cite{gorban2008dynamic,samal2016geometric}.

Another area of applications to mention is in neural networks. They are based on the hope that in particular it would shed light on the working of deep neural networks \cite{zhang2018tropical}.

Many classical results such as the fundamental theorem of algebra, B\'ezout theorem and Pascal’s theorem,  have been re-proved in tropical context \cite{grigg2007elementary,hoffman2020pascal,rimmasch2008complete}. Others, like the theorem of 27 lines on a smooth cubic surface, has been modified appropriately when proved in tropical geometry \cite{panizzut2022tropical}.  In a number of  cases the tropical approach gives another way to obtain results but in a supposedly simpler way, a consequence of the piecewise linear structure of tropical varieties. An example is given by the celebrated Mikhalkin’s correspondence theorem in the context of enumerative geometry \cite{mikhalkin2005enumerative}, which procured a huge boost of activity in the field. By counting tropical curves of a given degree and genus passing through an appropriate number of generic points one recovers the number of complex curves, that has been asked initially. In the tropical count, each curve comes with a multiplicity that is dictated by the combinatorial structure of its vertices. The multiplicity of a tropical curve tells how many curves in the classical count tropicalize to it. This approach has been used successfully for other problems from enumerative geometry, like the theorem of Caporaso-Harris \cite{caporaso1998counting,itenberg2004logarithmic,gathmann2008kontsevich}, giving at the same time an alternative algorithmic methods for eliminating variables \cite{sturmfels2008tropical}.

An important feature of the planar case is that every planar tropical curve is realizable as a tropicalization. This fact extends to tropical hypersurfaces in $\mathbb{R}^n.$ However, already for tropical curves in $\mathbb{R}^3$ this is not true. For example, in classical geometry, every spacial cubic curve of genus one belongs to some plane, but there is a tropical spacial cubic of genus one that doesn't belong to any tropical plane. The question of realizability is very subtle, a tropical surface and a tropical curve on it can be realizable separately, but not as a pair as was shown in \cite{brugalle2015obstructions}. 

The two most conventional versions of tropicalization rely on the concept of amoeba, where the name comes from the famous book \cite{gelfand1994discriminants}. In a more geometric version, one considers the scaling limit of amoebas of subvarieties in the complex algebraic torus \cite{mikhalkin2004amoebas}. For an algebraic subvariety $V\subset(\mathbb{C}^*)^n$ its amoeba is $\operatorname{Log}(V)\subset\mathbb{R}^n,$ where $\operatorname{Log}\colon(\mathbb{C}^*)^n\rightarrow\mathbb{R}^n$ sends $(z_1,\dots,z_n)$ to $(\log|z_1|,\dots,\log |z_n|).$ Amoebas have many remarkable properties that we are not going to discuss here. By replacing $\mathbb{C}$ in the previous sentence with a field $\mathbb{K}$ endowed with an ultra-norm $|\cdot|_{\mathbb{K}^*}$ (where $-\log\circ|\cdot|_{\mathbb{K}^*}$ is a valuation) we get the notion of a non-Archimedean amoeba, and its closure is called a tropicalization of the subvariety in this approach \cite{amini2014tropical}.

In \cite{mikhalkin2022non} a modified setup was considered. Namely, the map $\operatorname{Log}$ was interpreted as taking the quotient of the algebraic torus by its maximal compact subgroup. This interpretation allows a replacement of the torus by some other algebraic group, for example, $SL_2(\mathbb{C})$ or its central quotient $PSL_2(\mathbb{C}),$ for which the corresponding amoebas were studied, as well as their tropical limits in the case of families of curves. 

What shall be discussed in the next sections, is a modest extension of this work in the case of $PSL_2(\mathbb{C})$. We stick with this particular choice of the group since its natural compactification is the projectivization of the space of complex two-by-two matrices, identified with $\mathbb{C}P^3.$ It is slightly more convenient to us than the compactification of $SL_2(\mathbb{C}),$ which is a smooth projective quadric three-fold. We are not yet attempting to present an ultimate description of $PSL_2$ tropical geometry, but rather argue that it requires some decorations in order to capture even the most basic phenomena in the interplay of spatial curves and surfaces. 

\section{$PSL_2$ tropicalization}
A maximal compact subgroup of the complex algebraic group $PSL_2(\mathbb{C})$ is $PSU(2),$ the quotient of the group $SU(2)$ of unitary matrices with determinant $1$ by the subgroup $\{I_2,-I_2\},$ where $I_2$ denotes the two-by-two identity matrix. Topologically, $SU(2)$ is a three-dimensional sphere. One way to see it is via the identification of $SU(2)$ with the group of quaternions of norm one. More explicitly, $$SU(2)=\{\begin{pmatrix} z&w\\ -\overline{w}&\overline{z}\end{pmatrix}
\vert z,w\in\mathbb{C},|z|^2+|w|^2=1\}.$$
The quotient map $SU(2)\rightarrow PSU(2)$ is a degree two covering, thus $PSU(2)$ is topologically a three-dimensional projective space. 

In fact, it is also true geometrically in the following sense. As it was mentioned, $PSL_2(\mathbb{C})$ is naturally compactified to $\mathbb{C}P^3,$ the projectivization of the space of complex two-by-two matrices.  Moreover, there exists a real structure on it such that $PSU(2)$ is the corresponding set of real points. Let us construct this real structure explicitly. First, note that the operation of group-theoretical invertion on $PSL_2(\mathbb{C})$ extends to a linear involution on $\mathbb{C}P^3$ given by $[B]_{\mathbb{C}^*}\mapsto[B^{\pmb{c}}]_{\mathbb{C}^*},$ where 
$\begin{pmatrix}
    a&b\\c&d
\end{pmatrix}^{\pmb{c}}=\begin{pmatrix}
    d&-b\\-c&a
\end{pmatrix}$
and $[B]_{\mathbb{C}^*}\in\mathbb{C}P^3$ denotes the class of a nonzero matrix up to a rescaling by a non-zero complex number. Then, composing it with the Hermitian conjugation, $[B]_{\mathbb{C}^*}\mapsto[(B^{\pmb{c}})^*]_{\mathbb{C}^*}$ gives an anti-holomorphic involution on $\mathbb{C}P^3$ with the set of fixed points being $PSU(2).$ Observe also that the quadric surface $Q(\mathbb{C})=\mathbb{C}P^3\backslash PSL_2(\mathbb{C})$ given by the homogeneous equation $\det(B)=0$ is invariant under this involution but doesn't intersect $PSU(2).$ In other words, $Q(\mathbb{C})$ is a real quadric with no real points, such quadrics are also called imaginary. 

The analog of the classical logarithmic projection in the theory of amoebas is the quotient map $\varkappa\colon PSL_2(\mathbb{C})\rightarrow PSL_2(\mathbb{C})\slash PSU(2).$ We call $\varkappa$ the hyperbolic amoeba map since the target is naturally a hyperbolic 3-space $\mathbb{H}^3,$ which will now be explained. Note that there is a polar decomposition in the space of complex matrices, where a matrix $A$ is decomposed into a product $PU$ with $U$ being unitary (the angular part) and $P$ being Hermitian (the radial part). If, furthermore, $A$ is non-degenerate we may assume that $P$ is positive definite, and in such case the decomposition is unique. The map $\varkappa$ is seen as forgetting the unitary factor and keeping only the hermitian one. 

Thus, our model for the hyperbolic three-space is $$\mathbb{H}^3=\{P\in \operatorname{Mat}_{2\times 2}(\mathbb{C})|P=P^*,P>0, \det(P)=1\}.$$
This presentation essentially coincides with the Minkowski model, since the space of two-by-two Hermitian matrices is a real four-dimensional vector space, and the determinant induces on it the quadratic form of signature $(1,3).$ The set $\det(P)=1$ is a two-sheeted hyperboloid and the condition $P>0$ of being positively definite chooses one of the two sheets $\mathbb{H}^3$. Note that $-\det$ restricted to $\mathbb{H}^3$ defines a Riemannian metric. More details about this construction can be found in \cite{thurston1997three}. 

With this metric, our $\mathbb{H}^3$ is a homogeneous space since $PSL_2(\mathbb{C})$ acts on it isometrically and transitively (in fact, it is the group of all orientation-preserving isometries). Explicitly the action is given by $A(P)=APA^*,$ for $A\in SL_2(\mathbb{C})$ which clearly factors through $PSL_2(\mathbb{C}).$ Note that our map $\varkappa$ may be now seen as $\varkappa([A]_{\mathbb{C}^*})=A(I_2),$ where $\det(A)=1$ is assumed. Such a map is equivalent to taking the quotient by the stabilizer of $I_2,$ which consists of matrices $A$ such that $AA^*=I_2$ -- the condition of being unitary. As a byproduct, we just derived the classical isomorphism $PSU(2)\cong SO(3),$ where the latter is a group of rotations around a point in three-space.

Strictly speaking, the Hermitian part in the polar decomposition $A=PU$ is not exactly $\varkappa(A)=AA^*,$ but rather its square root since $AA^*=PUU^*P=P^2.$ This difference is minor, however, since taking power $h>0$ of matrices in our model of $\mathbb{H}^3$ defines a homothety $R_h(P)=P^h$ centered in $I_2$ and with scaling factor $h.$ One way to see it is by observing that the distance from $P\in\mathbb{H}^3$ to $I_2$ is computed as the absolute value of the logarithm of an eigenvalue of $P.$  

For completeness, we mention the spherical coamoeba map $$\varkappa^\circ\colon PSL_2(\mathbb{C})\rightarrow PSU(2)$$ which assigns the unitary part of the polar decomposition and forgets the Hermitian one. It will play an important role in describing $PSL_2$ phase tropicalization. The explicit formula of $\varkappa^\circ$ is the following.

\begin{lem}
    Let $[A]_{\mathbb{C}^*}\in PSL_2(\mathbb{C})$ such that $\det(A)=1.$ The value of the coamoeba map is expressed as $\varkappa^{\circ}([A]_{\mathbb{C}^*})=[A+(A^{\pmb{c}})^*]_{\mathbb{R}^*}\in \mathbb{R}P^3=PSU(2).$
    \label{lem_sphcoam}
\end{lem}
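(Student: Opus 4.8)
The plan is to verify the formula through the polar decomposition, reducing the claim to the assertion that $A+(A^{\pmb c})^*$ is a \emph{positive real} multiple of the unitary part $U$ of $A$. Once this is established, since $U\in SU(2)$ (as explained below), the class $[A+(A^{\pmb c})^*]_{\mathbb{R}^*}$ is literally $[U]_{\mathbb{R}^*}$, which is exactly $\varkappa^\circ([A]_{\mathbb{C}^*})$ under the identification $\mathbb{R}P^3=PSU(2)$ furnished by the real structure.

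First I would record the arithmetic of the operation $\pmb c$. Since $B\mapsto B^{\pmb c}$ is the adjugate, one has $A\,A^{\pmb c}=\det(A)\,I_2=I_2$ under the normalization $\det(A)=1$, so $A^{\pmb c}=A^{-1}$ and hence $(A^{\pmb c})^*=(A^*)^{-1}$. A short direct check on $2\times 2$ matrices also gives the commutation $(A^{\pmb c})^*=(A^*)^{\pmb c}$, which makes the antilinear involution $\tau(A):=(A^{\pmb c})^*$ square to the identity; this guarantees $\tau\big(A+(A^{\pmb c})^*\big)=A+(A^{\pmb c})^*$, so the matrix is a genuine real point and the symbol $[\,\cdot\,]_{\mathbb{R}^*}$ is well defined.

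Next I would substitute the polar decomposition $A=PU$, with $P=(AA^*)^{1/2}$ positive definite Hermitian and $U$ unitary. Because $\det(A)=1$ and $P>0$ we get $\det(P)=|\det(A)|=1$, and then $\det(U)=1$, so $U\in SU(2)$. From $A^*=U^*P$ we obtain $(A^*)^{-1}=P^{-1}U$ (using $U^{-1}=U^*$), whence
\[
A+(A^{\pmb c})^*=PU+P^{-1}U=(P+P^{-1})\,U .
\]
The decisive observation is that $\det(P)=1$ forces the two eigenvalues of $P$ to be reciprocal, say $\lambda$ and $\lambda^{-1}$ with $\lambda>0$; consequently $P+P^{-1}$ is Hermitian with both eigenvalues equal to $\lambda+\lambda^{-1}$, hence equal to the scalar matrix $(\lambda+\lambda^{-1})I_2$. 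Therefore $A+(A^{\pmb c})^*=(\lambda+\lambda^{-1})\,U$, a strictly positive real multiple of $U$, and the claim follows.

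I expect the main obstacle to be conceptual rather than computational: one must check that the identification of the fixed locus of $\tau$ with $PSU(2)$ sends a unitary $U\in SU(2)$ precisely to its $\mathbb{R}^*$-class $[U]_{\mathbb{R}^*}\in\mathbb{R}P^3$, so that passing from the $\mathbb{C}^*$-class to the $\mathbb{R}^*$-class is exactly what the real structure permits for $\tau$-fixed matrices. The only genuinely nontrivial algebraic input is the reciprocity of the eigenvalues of $P$, which is where the normalization $\det(A)=1$ enters essentially; dropping it would leave $P+P^{-1}$ non-scalar and break the identity.
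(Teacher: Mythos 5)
Your proof is correct, and while it rests on the same two ingredients as the paper's proof --- the polar decomposition $A=PU$ and the two-by-two Cayley--Hamilton identity --- it runs the computation in the opposite direction, with a different key step. The paper derives the formula forward: it applies Cayley--Hamilton to $B=\sqrt{AA^*}$, so that $\det(B)=|\det(A)|=1$ gives $I_2+AA^*=\operatorname{tr}(B)B$ and hence $[P]_{\mathbb{R}^*}=[I_2+AA^*]_{\mathbb{R}^*}$, and then simplifies $U=P^{-1}A$ up to positive scalar as $[(I_2+AA^*)^{\pmb{c}}A]_{\mathbb{R}^*}=[A+(A^{\pmb{c}})^*A^{\pmb{c}}A]_{\mathbb{R}^*}=[A+(A^{\pmb{c}})^*]_{\mathbb{R}^*}$, using the linearity of $B\mapsto B^{\pmb{c}}$ in dimension two, the rule $(XY)^{\pmb{c}}=Y^{\pmb{c}}X^{\pmb{c}}$, and $A^{\pmb{c}}A=I_2$. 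You instead verify the asserted formula backward: $(A^{\pmb{c}})^*=(A^*)^{-1}=P^{-1}U$, so $A+(A^{\pmb{c}})^*=(P+P^{-1})U$, and your reciprocal-eigenvalue observation that $P+P^{-1}=(\lambda+\lambda^{-1})I_2$ is precisely Cayley--Hamilton applied to $P$ itself, since $\det(P)=1$ gives $P+P^{-1}=\operatorname{tr}(P)I_2$. So the same ``two-by-two miracle'' is invoked, but at a different matrix: the paper needs it at $\sqrt{AA^*}$ to avoid computing the square root explicitly, while you need it only at $P$, which lets you bypass the adjugate manipulation of $I_2+AA^*$ entirely. Your route also buys a little extra: the scalar $\operatorname{tr}(P)=\lambda+\lambda^{-1}\geq 2$ is strictly positive, so $A+(A^{\pmb{c}})^*$ is a \emph{positive} multiple of $U$ --- in particular nonzero, so the class $[\,\cdot\,]_{\mathbb{R}^*}$ is automatically well defined, and the representative is pinned down up to $\mathbb{R}_{>0}$ rather than merely $\mathbb{R}^*$, which would matter if one wanted the $SU(2)$-lift and not just the $PSU(2)$-class. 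Your preliminary checks (that $\tau(A)=(A^{\pmb{c}})^*$ is an additive antilinear involution, so $A+\tau(A)$ is $\tau$-fixed, and that $U\mapsto[U]_{\mathbb{R}^*}$ realizes the identification $PSU(2)=\mathbb{R}P^3$) are exactly consistent with the real-structure discussion in Section 2 of the paper and close the conceptual gap you correctly anticipated.
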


\begin{proof} Let $A=PU$ be the polar decomposition. Then, $P=\sqrt{AA^*}$ and $U=P^{-1}A.$ We rewrite the square root expression using Cayley-Hamilton theorem that asserts the annihilation of any matrix by its characteristic polynomial. Explicitly, for a two-by-two matrix $B$ we have $\det(B)I_2-\operatorname{tr}(B)B+B^2=0.$ Substituting $B=\sqrt{AA^*}$ we get $[P]_{\mathbb{R}^*}=[I_2+AA^*]_{\mathbb{R}^*}.$ Then, $$\varkappa^\circ([A]_{\mathbb{C}})=[(I_2+AA^*)^{\pmb{c}}A]_{\mathbb{R}^*}=[A+(A^{\pmb{c}})^*A^{\pmb{c}}A]_{\mathbb{R}^*}=[A+(A^{\pmb{c}})^*]_{\mathbb{R}^*}.\vspace{-20pt}$$
\end{proof}

Here is the geometric version of $PSL_2$ tropicalization of hyperbolic amoebas of curves as it is described in \cite{mikhalkin2022non}. 
A hyperbolic amoeba of a subvariety $V$ in $PSL_2(\mathbb{C})$ is $\varkappa(V).$
A scaling sequence is $\{t_\alpha\}_{\{\alpha\in\pmb{A}\}},$ an unbounded subset of real numbers with $t_\alpha>1.$ Then, for a family of degree $d$ reduced irreducible curves $V_\alpha\subset PSL_2{\mathbb{C}}$ there exists a subsequence $V_\beta$ such that $R_{t_\beta}(\varkappa(V_\beta))\subset\mathbb{H}^3$ converges in the Hausdorff sense to an $\mathbb{H}^3$-spherical complex associated with a degree $d$ $\mathbb{H}^3$-floor diagram. The formal definitions of the latter notions are not relevant for the present article so we are not repeating it here. Vaguely speaking, the tropical limits of hyperbolic amoebas of curves are unions of spheres centered in $I_2$ (floors) and segments of geodesics passing through $I_2$ (elevators). In the particular case of $d=1,$ there are two options: either there is a single elevator and no floors, or there is a single floor and two elevators. 

In \cite{mikhalkin2022non}, the case of $PSL_2$ tropical limits of surfaces was not covered. This is mostly due to the reason that the result was not expected to be informative. Indeed, even before the limit, if $V\subset PSL_2(\mathbb{C})$ is an odd degree surface, then $\varkappa(V)=\mathbb{H}^3,$ so nothing can be extracted from it. The even degree case is slightly less trivial, i.e. $\mathbb{H}^3\backslash\varkappa(V)$ can be non-empty but it has at most one connected component which is convex. In a separate article, we will show (in the geometric setup of scaling sequences) that a tropical limit of hyperbolic amoebas of even degree surfaces is always a complement to a geometric open ball centered in $I_2.$ 

The moral of all these is that bare $PSL_2$ tropicalization is not good for observing the geometry of surfaces in $\mathbb{P}^3.$ For example, on a generic quintic surface there are no lines. But in the hyperbolic-tropical picture, we see infinitely many of their images! The aim of the present article is to show that this affliction has the potential to be cured. We will show that by restoring the phase in $PSL_2$ tropicalization at least such type of a basic fact becomes visible.

\section{Phase tropicalization}
The idea of phase tropicalization is that we consider the scaling limit of a family of varieties inside their ambient space, i.e. without forgetting the phase. In the setup of the previous section, it means lifting the $h$-homotheties $R_h:\mathbb{H}^3\rightarrow\mathbb{H}^3$ to the $h$-parametric family of diffeomorphisms $\tilde{R}_h:PSL_2(\mathbb{C})\rightarrow PSL_2(\mathbb{C})$ so that $R_h\circ\varkappa=\varkappa\circ\tilde{R}_h$ (similarly, $\varkappa^\circ\circ\tilde{R}_h=\varkappa^\circ$ holds for the coamoeba map), and considering limits of $\tilde{R}_{t_\alpha}(V_\alpha)$ inside $PSL_2(\mathbb{C}).$ 
The lift can be performed using the polar decomposition $\tilde{R}_h(PU)=P^hU,$ and it continuously extends to $Q(\mathbb{C})$ as an identity map. 

Although it seems to be a triviality at first glance, phase tropicalization of a family of points opens the door for the possibility of computing (or, at least restricting) phase tropical limits of higher dimensional varieties. For example, in the classical situation of the complex algebraic torus consider a family of numbers $z(t)\in\mathbb{C}^*$ parametrized by real $t\rightarrow\infty.$ Assume that the first asymptotic is known, i.e. $z(t)=ct^\alpha+o(t^\alpha)$ for some $c\in\mathbb{C}^*$ and $\alpha\in\mathbb{R}.$ Then $\log_t|z(t)|\rightarrow \alpha,$ which is the usual tropical limit, we may call the assignment $z(t)\mapsto \alpha$ an ordinary valuation. The phase tropical limit arises if we consider the same type of rescaling but inside $\mathbb{C}^*$, i.e. $$\frac{z(t)}{|z(t)|}|z(t)|^{(\log(t))^{-1}}\xrightarrow[t\rightarrow\infty]{} \exp(\alpha +i\operatorname{Arg}(c)).$$   
The assignment to $z(t)$ of the expression on the right is the phase valuation. Applied coordinate-wise, to each point of a subvariety of a higher dimensional algebraic torus, this gives a classical phase-tropical limit.

In the previous sentence, we meant implicitly that our subvariety that we wish to tropicalize is now defined over a field $\mathbb{K}$ whose elements depend on a parameter $t.$ There are various choices for such a field and it is not our intention to discuss all the technical issues specific to different choices. The intuition, however, is as follows. When a subvariety of a torus is defined by a system of equations with coefficients depending on the parameter $t$ it may be treated either as a family of varieties $V_t$ depending on $t$ or as a single variety $V$ over $\mathbb{K}.$ In the former interpretation, we may consider the (phase) tropicalization of this family as a limit in $t\rightarrow\infty$, and in the latter one, as a closure of the image of a single variety $V$ under the coordinate-wise (phase) valuation. It is generally assumed that the two approaches agree, however, we are not able to suggest a concrete reference where the fact is proven in full generality. 

Coming back to matrices, assume that $[A(t)]_{\mathbb{C}^*}\in PSL_2(\mathbb{C})$ and $A(t)=t^\alpha B+o(t^\alpha)$ as $t\rightarrow\infty.$ Note that if $\det(A(t))=1$ then $\alpha\geq 0.$ The classical case suggests what should be the scaling $h$ in $\tilde{R}_{h}$ for $\tilde{R}_{h(t)}([A(t)]_{\mathbb{C}^*})$ to converge, namely we take $h(t)=(\log(t))^{-1}.$

\begin{lem}
Consider $A(t)=t^\alpha B+o(t^\alpha)$ as $t\rightarrow\infty.$ If $\det(A(t))=1,$ then $\tilde{R}_{(\log(t))^{-1}}([A(t)]_{\mathbb{C}^*})$ converges to $[e^\alpha B+e^{-\alpha} (B^{\pmb{c}})^*]_{\mathbb{C}^*}\in PSL_2(\mathbb{C}).$ 
\label{lem_tropofpoint}
\end{lem}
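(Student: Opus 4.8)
The plan is to make the action of $\tilde{R}_h$ transparent by diagonalizing $A$ through its polar/singular value decomposition, and then to express the outcome in terms of the two matrices $A$ and $(A^{\pmb{c}})^*$ that already govern the coamoeba map in Lemma~\ref{lem_sphcoam}. First I would write a singular value decomposition $A = U_1\Sigma U_2^*$ with $\Sigma = \operatorname{diag}(e^\mu,e^{-\mu})$, $\mu = \log\sigma_1 \geq 0$; the hypothesis $\det A = 1$ forces the two singular values to be reciprocal, which is exactly why a single parameter $\mu$ suffices. Since $P = \sqrt{AA^*} = U_1\Sigma U_1^*$ and $U = P^{-1}A = U_1U_2^*$, one gets at once $\tilde{R}_h(A) = P^hU = U_1\Sigma^h U_2^*$, so $\tilde{R}_h$ merely raises the singular values to the power $h$. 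Because $\det A = 1$ gives $A^{\pmb{c}} = A^{-1}$, a one-line computation yields $(A^{\pmb{c}})^* = U_1\Sigma^{-1}U_2^*$, so $A$ and $(A^{\pmb{c}})^*$ are simultaneously ``diagonal'' in the same pair of unitary frames.

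The key identity then reduces to linear algebra on the diagonal part alone: solving $e^{\pm h\mu} = c_1e^{\pm\mu} + c_2e^{\mp\mu}$ for the scalars $c_1,c_2$ gives $c_1 = \sinh((h+1)\mu)/\sinh(2\mu)$ and $c_2 = \sinh((1-h)\mu)/\sinh(2\mu)$, whence
\begin{equation*}
\tilde{R}_h([A]_{\mathbb{C}^*}) = \bigl[\,\sinh((h+1)\mu)\,A + \sinh((1-h)\mu)\,(A^{\pmb{c}})^*\,\bigr]_{\mathbb{C}^*},
\end{equation*}
the positive scalar $1/\sinh(2\mu)$ being discarded in the projective class. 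Alternatively this can be produced exactly in the style of Lemma~\ref{lem_sphcoam}, by writing $P^h = \frac{\sinh(h\mu)}{\sinh\mu}P - \frac{\sinh((h-1)\mu)}{\sinh\mu}I$ from Cayley--Hamilton and substituting $U = \frac{1}{2\cosh\mu}(A + (A^{\pmb{c}})^*)$; both routes give the same displayed formula.

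Next I would control the asymptotics of $\mu = \mu(t)$. Here $\sigma_1(t) = \lVert A(t)\rVert_{\mathrm{op}}$, and from $A(t) = t^\alpha B + o(t^\alpha)$ continuity of the operator norm gives $\sigma_1(t) = t^\alpha\lVert B\rVert_{\mathrm{op}}(1+o(1))$, hence $\mu(t) = \alpha\log t + \log\lVert B\rVert_{\mathrm{op}} + o(1)$. The decisive point --- and the reason the scaling $h(t) = (\log t)^{-1}$ is the correct one --- is that this forces $h(t)\mu(t)\to\alpha$, the additive constant $\log\lVert B\rVert_{\mathrm{op}}$ being annihilated by the factor $(\log t)^{-1}$. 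Dividing the bracket of the identity by the scalar $\tfrac12 e^\mu t^\alpha$ and using $\sinh((h+1)\mu)/(\tfrac12 e^\mu) = e^{h\mu} - e^{-(h+2)\mu}\to e^\alpha$ and $\sinh((1-h)\mu)/(\tfrac12 e^\mu) = e^{-h\mu} - e^{-(2-h)\mu}\to e^{-\alpha}$, together with $A(t)/t^\alpha\to B$ and $(A(t)^{\pmb{c}})^*/t^\alpha\to (B^{\pmb{c}})^*$, the representative converges to $e^\alpha B + e^{-\alpha}(B^{\pmb{c}})^*$, which is the claim. When $\alpha = 0$, so that $B$ is nondegenerate and $\mu$ stays bounded, the same formula falls out of a separate and easier limit, consistent with the coamoeba value of Lemma~\ref{lem_sphcoam}.

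The hard part will not be the algebraic identity, which is essentially forced by the decomposition, but the analytic bookkeeping of the two interacting limits: one must verify that the $o(t^\alpha)$ error in $A(t)$ and the $o(1)$ error in $\mu(t)$ survive passage through the nonlinear operations $\sqrt{\cdot}$, $(\cdot)^h$ and the renormalization by $e^\mu t^\alpha$ without spoiling convergence. I would also record that the limit genuinely lands in $PSL_2(\mathbb{C})$ and not on the quadric $Q(\mathbb{C})$: in the generic case $\alpha > 0$ the relation $\det(t^\alpha B + o(t^\alpha)) = t^{2\alpha}\det B + o(t^{2\alpha}) = 1$ forces $\det B = 0$ and $\det(B^{\pmb{c}})^* = 0$, so the $2\times 2$ identity $\det(X+Y) = \det X + \det Y + \operatorname{tr}(X^{\pmb{c}}Y)$ gives $\det\!\bigl(e^\alpha B + e^{-\alpha}(B^{\pmb{c}})^*\bigr) = \operatorname{tr}\!\bigl(B^{\pmb{c}}(B^{\pmb{c}})^*\bigr) = \lVert B^{\pmb{c}}\rVert_F^2 > 0$, so the limiting matrix is in particular nonzero and nondegenerate; the case $\alpha = 0$ is clear since then $B$ is already nondegenerate.
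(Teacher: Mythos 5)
Your proof is correct, and it takes a genuinely different route from the paper's. The paper also works from the polar decomposition, but it computes the two limits separately --- the unitary part tends to $[B+(B^{\pmb{c}})^*]_{\mathbb{R}^*}$ and the rescaled Hermitian part to $[e^{-\alpha}\operatorname{tr}(BB^*)I_2+(e^{\alpha}-e^{-\alpha})BB^*]_{\mathbb{C}^*}$ --- and then multiplies them back together, which is what forces the rank-one identities $BB^{\pmb{c}}=0$ and $BB^*B=\operatorname{tr}(BB^*)B$ into the argument. Your exact interpolation identity $\tilde{R}_h([A]_{\mathbb{C}^*})=[\sinh((h+1)\mu)\,A+\sinh((1-h)\mu)\,(A^{\pmb{c}})^*]_{\mathbb{C}^*}$ performs that recombination once and for all at finite $t$: it specializes to Lemma~\ref{lem_sphcoam} at $h=0$ and to the identity map at $h=1$, and it reduces the whole lemma to the scalar asymptotics $h(t)\mu(t)\to\alpha$ together with the entrywise convergence of $A/t^\alpha$ and $(A^{\pmb{c}})^*/t^\alpha$. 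This is what your approach buys, and it also means the ``analytic bookkeeping'' you flag as the hard part is in fact already discharged: after the identity no nonlinear operation remains to propagate errors through, since the square root and the $h$-th power were absorbed into the single scalar $\mu(t)$, whose asymptotics $\mu(t)=\alpha\log t+\log\lVert B\rVert_{\mathrm{op}}+o(1)$ you extracted exactly from continuity of the operator norm. Two minor remarks: your formula degenerates when $\mu(t)=0$ (i.e.\ $A(t)$ unitary), which is harmless because $\mu\to\infty$ when $\alpha>0$ and the coefficients extend continuously anyway, while the $\alpha=0$ case is, exactly as in the paper, settled by reduction to Lemma~\ref{lem_sphcoam}; and your closing computation $\det\bigl(e^\alpha B+e^{-\alpha}(B^{\pmb{c}})^*\bigr)=\operatorname{tr}\bigl(B^{\pmb{c}}(B^{\pmb{c}})^*\bigr)>0$, verifying that the limit genuinely lands in $PSL_2(\mathbb{C})$ rather than on $Q(\mathbb{C})$, is a worthwhile check that the paper's proof leaves implicit.
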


\begin{proof}
Note that if $\alpha=0$ and so $\det(B)=1,$ then the limit is just the value of the spherical coamoeba map $\varkappa^\circ$ on $B$ (see Lemma \ref{lem_sphcoam}). Otherwise, if $\alpha>0$ and so $\det(B)=0,$ we find separately the limits of the unitary and the Hermitian parts in the polar decomposition for $\tilde{R}_{(\log(t))^{-1}}([A(t)]_{\mathbb{C}^*})$. 
The first one is simply $[B+(B^{\pmb{c}})^*]_{\mathbb{R}^*}$ and the second one is $[e^{-\alpha}\operatorname{tr}(BB^*)I_2+(e^{\alpha}-e^{-\alpha})BB^*]_{\mathbb{C}^*}.$ As an intermediate step for the latter, find the direction at infinity in which $A(t)(A(t))^*$ goes as well as the asymptotic of the distance to $I_2$ using the relation between the trace to the eigenvalues, and the eigenvalues to the distance. Combining the parts together and using the identities $BB^{\pmb{c}}=0$ and $BB^*B=\operatorname{tr}(BB^*)B$ for two-by-two matrices of rank one, we arrive at the desired expression.
\end{proof}

We would like to use the formula derived  in Lemma \ref{lem_tropofpoint} as a definition for the new valuation $\VAL:PSL_2(\mathbb{K})\rightarrow PSL_2(\mathbb{C}).$ 
Given $A(t)=t^\alpha B+o(t^\alpha)$ with $\det(A(t))=1,$ let $$\VAL([A(t)]_{\mathbb{K}^*})=[e^\alpha B+e^{-\alpha} (B^{\pmb{c}})^*]_{\mathbb{C}^*},$$ be the $PSL_2$ phase valuation. Note also that if $\det(A(t))=0$ then $\tilde{R}_{(\log(t))^{-1}}([A(t)]_{\mathbb{C}^*})$ converges to $[B]_{\mathbb{C}^*}\in Q(\mathbb{C}),$ thus we may extend $\VAL$ to the map from $\mathbb{K}P^3$ to $\mathbb{C}P^3.$

The formula for $\VAL$ of Lemma \ref{lem_tropofpoint}, although explicit, is not that convenient to work with. We will use a different perspective on the space $PSL_2(\mathbb{C})$ as well as its compactification $\mathbb{C}P^3$. In order to do it, let's recall the classical one-to-one correspondence between oriented lines in $\mathbb{R}P^3$ and complex points of an imaginary quadric surface. Namely, for every point $p\in Q(\mathbb{C})$ consider its conjugate point $q$ and trace a complex line $l(\mathbb{C})$ through them. By construction, this line is invariant under the real structure, so it makes sense to look at $l(\mathbb{R})\subset\mathbb{R}P^3$ which cuts $l(\mathbb{C})\cong\mathbb{C}P^1$ into two discs. Moreover, since $l(\mathbb{C})$ is canonically oriented being a complex manifold, these discs define (opposite) orientations on their common boundary. We pick the orientation on $l(\mathbb{R})$ induced by the disc containing the point $p$ we started with.

As a byproduct of this construction, we define the circle bundle $\mathcal{S}$ on $Q(\mathbb{C})$ where a fiber over a point $p$ is the corresponding real oriented line. We need to write the fiber a bit more explicitly, namely for a point $p=[B]_{\mathbb{C}^*}\in Q(\mathbb{C}),$ the fiber $\mathcal{S}_p$ is $[cB+\overline{c} (B^{\pmb{c}})^*]_{\mathbb{R}^*},$ where $c$ runs through the set of all nonzero complex numbers. This set is canonically identified with classes $[cB]_{\mathbb{R}^*}.$ Therefore, we may write the total space of the circle bundle $\mathcal{S}$ simply as the set of all classes $[B]_{\mathbb{R}^*},$ where $B$ runs over all nonzero complex rank one matrices. The projection $\mathcal{S}\rightarrow Q(\mathbb{C})$ is just $[B]_{\mathbb{R}^*}\mapsto[B]_{\mathbb{C}^*}.$ 
Then, there is a diffeomorphism $(0,\infty)\times \mathcal{S}\rightarrow PSL_2(\mathbb{C})\backslash PSU(2)$ given by $(\alpha,[B]_{\mathbb{R}^*})\mapsto [e^\alpha B+e^{-\alpha} (B^{\pmb{c}})^*]_{\mathbb{C}^*}.$ We complete this presentation by gluing spaces $(0,\infty)\times \mathcal{S},$ $\{0\}\times PSU(2)$ and $\{\infty\}\times Q(\mathbb{C}).$

We call the result the {\em cone picture} of $\mathbb{C}P^3,$ where at the base at infinity there is the imaginary quadric, at the tip there is the real projective space, and in between is the cylinder of the total space of the circle bundle. Note that the fibers of the circle bundle are glued nicely both to the tip and the base. In the first case, the fiber converges to the corresponding line in the real projective space and, in the second case, it contracts to a point on the base. In this picture, the expression of $\VAL$ is particularly nice. Namely, for $A(t)=t^\alpha B+o(t^\alpha),$ $$\VAL([A(t)]_{\mathbb{C}^*})=\begin{cases}
(0,\varkappa^\circ(B)),\ \ \det(A(t))=1, \det(B)=1,\ \alpha=0;\\
(\alpha,[B]_{\mathbb{R}^*}),\ \ \; \det(A(t))=1, \det(B)=0,\ \alpha>0;\\
(\infty,[B]_{\mathbb{C}^*}),\ \ \det(A(t))=0.
\end{cases}
$$
 
\section{The case of lines}
The goal of this section is to compute the image under $\VAL$ for all lines. Starting from now we will be working over the field $\mathbb{K}$ of Puiseux series with complex coefficients and variable $t\rightarrow\infty.$ To make the formulas a bit less cumbersome, in the statements of Theorems below we have chosen to suppress writing the intersections of the intervals with the set of rational numbers, which are the possible powers of the type of series we are considering. For the sake of simplifying notations, in what follows we suppress the dependence on $t$ for elements of $\mathbb{K}.$

We start with a line $L$ in $\mathbb{K}P^3$ intersecting the quadric $Q(\mathbb{K})$ at a single point $[A]_{\mathbb{K}^*}.$ Take any other point $[B]_{\mathbb{K}^*}$ on $L.$ Then, $L\backslash\{[A]_{\mathbb{K}^*}\}$ is parametrized as $[zA+B]_{\mathbb{K}^*}$ by $z\in\mathbb{K}.$ Note that $\det(B)\neq 0,$ thus we may assume that $\det(B)=1$ by choosing a different representative in $[B]_{\mathbb{K}^*}.$ This assumption implies also that $\det(Az+B)=1$ for all $z\in\mathbb{K}$. This follows from the fact that $\det(zA+B)$ is, in general, a quadratic polynomial in $z$ with the leading term being $\det(A)z^2,$ which vanishes since $A$ is on the quadric. Thus, $\det(zA+B)$ is a linear polynomial. Since $L$ intersects the quadric at a single point, the polynomial has no roots and, therefore, is constant.

Now write $A$ as $A_0t^\alpha+o(t^\alpha)$ and $B$ as $B_0t^\beta+o(t^\beta),$ where $A_0$ and $B_0$ are nonzero matrices with complex entries and $\beta\geq 0$ since $\det(B)=1$. Take $z=ct^\gamma+o(t^\gamma),$ where $c$ is a nonzero complex number. To find the top-order term in $Az+B,$ look at the tropical polynomial $\gamma\mapsto\max(\alpha+\gamma,\beta),$ which has a single breaking point at $\gamma_0=\beta-\alpha\geq 0$ and the minimal value equal to $\beta.$ Thus, if $\gamma>\gamma_0$ the leading term is $A_0t^{\gamma+\alpha},$ if $\gamma<\gamma_0$ it is $Bt^\beta$ and if $\gamma=\gamma_0$ these two expressions compete, which might lead to a cancellation, and a lower leading term as a result.

Let us first resolve the latter situation. The cancellation may happen only if $A_0$ and $B_0$ are proportional, i.e. if $B_0=bA_0$ for some $b\in\mathbb{C}.$ Take a different parametrization of $L$ by substituting $z=w-bt^{\gamma_0}$ which results in $Az+B=Aw+B',$ where $B'=B-bt^{\gamma_0}A$ has a strictly lower order highest term than $B.$ Therefore, assuming that $B$ in $zA+B$ has the smallest top order term among all such parameterizations of the same line makes this situation impossible, and we may discard this possibility.

Now, assume $\beta>0$ in which case $\VAL(L)$ doesn't have points over the tip of the cone $\widehat Q.$ If $\gamma>\gamma_0,$ the term $cA_0t^{\gamma+\alpha}$ dominates in $Az+B.$ In the cone picture, this gives a ray with base $[A_0]_{\mathbb{C}^*}$ and going between the heights $\gamma_0$ and $\infty.$ Since $c\in\mathbb{C}^*$ may vary arbitrarily we get the whole circle in $\mathcal{S}$ over each interior point of the ray in the image of $\VAL$.

At $\gamma=\gamma_0,$ the top-order term is $(cA_0+B_0)t^\beta.$ Note that $\det(cA_0+B_0) = 0$ since we assumed that $\beta>0.$ Thus $c\in\mathbb{C}^*\mapsto [cA_0+B_0]_{\mathbb{C}^*}$ parametrizes a line $l$ on the quadric $Q(\mathbb{C})$ with two points $[A_0]_{\mathbb{C}^*}$ and $[B_0]_{\mathbb{C}^*}$ missing. Note, however, that for the purpose of computing $\VAL,$ we are rather interested in $[cA_0+B_0]_{\mathbb{R}^*}$ which defines a section of $\mathcal{S}$ restricted to $l.$ It is not possible to continuously extend this section at $[A_0]_{\mathbb{C}^*},$ the base of the above-mentioned ray, but at the other missing point the section does extend by $[B_0]_{\mathbb{R}^*},$ which corresponds precisely to the image of $\VAL$ for all the points on $L$ parametrized by $z$ with $\gamma<\gamma_0.$ 

To complete the description of $\VAL(L)$ for $L$ tangent to $Q(\mathbb{K}),$ we are left with the case $\beta=0.$ Again, the ray part corresponding to $\gamma>0$ and domination of $cAt^{\gamma+\alpha}$ is exactly same as before. What is different is that now we have points over the tip of the cone corresponding to $\gamma\leq -\alpha.$ Under $\VAL,$ all the points of $L$  parametrized by $z$ with $\gamma<-\alpha$ will be mapped to a single value $\varkappa^\circ[B_0]_{\mathbb{C}^*}=[B_0+(B_0^*)^{-1}]_{\mathbb{R}^*}.$ Finally, if $\gamma=-\alpha$ we get the spherical coamoeba of a complex line parametrized by $c\mapsto [cA_0+B_0]_{\mathbb{C}^*}.$ Since it will not play any role in the discussion below, we simply state that this coamoeba is equal to the plane spanned by the point $\varkappa^\circ[B_0]_{\mathbb{C}^*}$ and the real line corresponding to $[A_0]_{\mathbb{C}^*}\in Q(\mathbb{C})$ with the line removed (here we also implicitly claim that the point cannot belong to the line). To summarize, we have the following.

\begin{thm}
Let $L$ be a line in $\mathbb{K}P^3$ intersecting $Q(\mathbb{K})$ in a single point. Then, $\VAL(L)$ is either \begin{equation}\{r\}\times\sigma(l\backslash\{p\}) \cup (r,\infty)\times\mathcal{S}|_{\{p\}}\cup\{\infty\}\times\{p\},\label{eq_tangent_r}\end{equation} where $r>0,$ $l$ is a line on $Q(\mathbb{C}),$ $p\in l$ and $\sigma$ is a section of $\mathcal{S}|_{l\backslash\{p\}},$ or
\begin{equation}\{0\}\times\varkappa^\circ(l\backslash \{p\})\cup (0,\infty)\times\mathcal{S}|_{\{p\}}\cup\{\infty\}\times\{p\},\label{eq_tangent_0}\end{equation}
where $l$ is a line on $\mathbb{C}P^3$ tangent to $Q(\mathbb{C})$ at the point $p.$  
\label{thm_tangentlines}
\end{thm}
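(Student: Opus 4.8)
\emph{The plan is} to parametrize $L$ affinely off its unique quadric point and then evaluate $\VAL$ term by term through the cone-picture formula of the previous section, organizing the computation by the Newton data of the parametrizing series. Concretely, I would set $[A]_{\mathbb{K}^*}=L\cap Q(\mathbb{K})$, pick a second point $[B]_{\mathbb{K}^*}\in L$, and write $L\setminus\{[A]_{\mathbb{K}^*}\}=\{[zA+B]_{\mathbb{K}^*}:z\in\mathbb{K}\}$. Since $[B]_{\mathbb{K}^*}\notin Q(\mathbb{K})$ I may rescale so that $\det B=1$; then $\det(zA+B)$ is affine-linear in $z$ (its $z^2$-coefficient is $\det A=0$) and has no root because $L$ meets $Q(\mathbb{K})$ only at $[A]_{\mathbb{K}^*}$, hence $\det(zA+B)\equiv 1$. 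Writing $A=A_0t^\alpha+o(t^\alpha)$, $B=B_0t^\beta+o(t^\beta)$ and $z=ct^\gamma+o(t^\gamma)$ with $c\in\mathbb{C}^*$, these normalizations force $\det A_0=0$, so $p:=[A_0]_{\mathbb{C}^*}\in Q(\mathbb{C})$, and $\beta\geq 0$. The dichotomy of the theorem is exactly $\beta>0$ versus $\beta=0$.

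The leading term of $zA+B$ is controlled by the tropical polynomial $\gamma\mapsto\max(\alpha+\gamma,\beta)$ with its single break at $\gamma_0=\beta-\alpha$: it equals $cA_0t^{\gamma+\alpha}$ for $\gamma>\gamma_0$, equals $B_0t^\beta$ for $\gamma<\gamma_0$, and for $\gamma=\gamma_0$ the two compete as $(cA_0+B_0)t^\beta$. Before reading off images I would discard the degenerate case in which the competing terms cancel: this happens only when $A_0$ and $B_0$ are proportional, and then the substitution $z=w-bt^{\gamma_0}$ replaces $B$ by $B'=B-bt^{\gamma_0}A$, of strictly lower top order, while keeping $\det B'=1$ and hence its order $\geq 0$. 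As the top order is bounded below and strictly decreases, the procedure terminates, so after choosing $B$ of minimal top order I may assume $A_0\not\parallel B_0$, whence $cA_0+B_0\neq 0$ for every $c$ and no cancellation occurs.

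Then the three regimes feed straight into the cone-picture formula for $\VAL$. If $\beta>0$: for $\gamma>\gamma_0$ the leading matrix $cA_0$ is rank one and the exponent $\gamma+\alpha$ ranges over $(\beta,\infty)$, so $\VAL$ produces $(\beta,\infty)\times\mathcal{S}|_{\{p\}}$, the fiber over $p$ being swept out entirely as $c$ runs over $\mathbb{C}^*$; for $\gamma=\gamma_0$ the matrix $cA_0+B_0$ is again rank one (its determinant is the $t^{2\beta}$-coefficient of the constant series $1$, hence zero), so $c\mapsto[cA_0+B_0]_{\mathbb{R}^*}$ is a section $\sigma$ at the single height $r=\beta$ of $\mathcal{S}$ over the quadric line $l=\{[cA_0+B_0]_{\mathbb{C}^*}\}$; and $\gamma<\gamma_0$ adds only the constant value $(\beta,[B_0]_{\mathbb{R}^*})$. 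Together with $\VAL([A]_{\mathbb{K}^*})=(\infty,p)$ these assemble into \eqref{eq_tangent_r}. If $\beta=0$ the ray part is identical over heights $(0,\infty)$, but at $\gamma=\gamma_0=-\alpha$ the coefficient $\det(cA_0+B_0)$ is now the order-$0$ coefficient of $1$, namely $1$, so $cA_0+B_0$ is unimodular and $\VAL$ lands at height $0$ as $\varkappa^\circ(cA_0+B_0)$; this yields $\varkappa^\circ(l\setminus\{p\})$ for the complex line $l$ tangent to $Q(\mathbb{C})$ at $p$ (the $\gamma<\gamma_0$ value being absorbed at $c=0$), and with $(\infty,p)$ we obtain \eqref{eq_tangent_0}.

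I expect the delicate step to be the behaviour of the height-$r$ section near the two ends of $l$, which is what forces the asymmetric puncturing in \eqref{eq_tangent_r}. As $c\to 0$ the class $[cA_0+B_0]_{\mathbb{R}^*}$ converges to $[B_0]_{\mathbb{R}^*}$, so $\sigma$ extends across that end and matches the $\gamma<\gamma_0$ value; but as $c\to\infty$ only the $\mathbb{R}^*$-phase of the diverging scalar survives, so $[cA_0+B_0]_{\mathbb{R}^*}$ winds around the fiber $\mathcal{S}|_{\{p\}}$ without a limit, meaning $\sigma$ cannot extend across $p$ and its closure instead fills the whole fiber --- precisely the fiber delivered by the ray at heights just above $r$. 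Making this winding versus convergence dichotomy rigorous, and verifying the parallel coamoeba claim in the $\beta=0$ case, is where the real work lies; the normalization $\det(zA+B)\equiv 1$ is what keeps all the subleading estimates under control.
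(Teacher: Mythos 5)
Your proposal is correct and follows essentially the same route as the paper: the parametrization $[zA+B]_{\mathbb{K}^*}$ with $\det B=1$ forcing $\det(zA+B)\equiv 1$, the tropical polynomial $\max(\alpha+\gamma,\beta)$ with its break at $\gamma_0$, the reparametrization $z=w-bt^{\gamma_0}$ to eliminate cancellations by minimizing the top order of $B$, and the case split $\beta>0$ versus $\beta=0$ assembling into \eqref{eq_tangent_r} and \eqref{eq_tangent_0} respectively. Your only (harmless) deviation is in the last paragraph: since $\VAL(L)$ is the raw image rather than its closure, the non-extendability of $\sigma$ at $p$ (the winding of $[cA_0+B_0]_{\mathbb{R}^*}$ as $c\to\infty$) needs no further rigor for the statement as given --- the gap at $(r,p)$ is simply part of the image, and the paper later relies on exactly this gap.
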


Now we proceed with the case of a line $L$ intersecting the quadric $Q(\mathbb{K})$ at two points. Denote such a pair of points by $[A]_{\mathbb{K}^*}$ and $[B]_{\mathbb{K}^*},$ where $A$ and $B$ are nonzero matrices with entries in $\mathbb{K}$ having vanishing determinant. Then, we can parametrize the line $L$ by $w\mapsto[Aw+B]_{\mathbb{K}^*}.$ Again, the vanishing of $\det(A)$ implies that $\det(Aw+B)$ is a linear polynomial. Its only root is at $w=0$ since $\det(B)=0.$ Therefore $\det(Aw+B)=uw$ for some  $u\in\mathbb{K}^*.$ In order to compute $\VAL,$ we normalize $Aw+B$ by dividing it by $\sqrt{uw}.$ Thus, after appropriately rescaling $A$ and $B,$ we may assume that the line $L$ without the two points on $Q(\mathbb{K})$ is doubly covered by $\mathbb{K}^*$ with the map given by $z\mapsto[Az+Bz^{-1}]_{\mathbb{K}^*}.$

Write $A=A_0t^{\alpha}+o(t^\alpha),$ $B=B_0t^{\beta}+o(t^\beta)$ and $z=ct^\gamma+o(t^\gamma),$ where $A_0$ and $B_0$ are rank one complex matrices and $c\in\mathbb{C}^*.$ To find the leading power of $Az+Bz^{-1},$ we look at the tropical polynomial $\gamma\mapsto\max(\alpha+\gamma,\beta-\gamma).$ Its breaking point, i.e. the tropical root, is at $\gamma_0=\frac{\beta-\alpha}{2},$ with the value at it being $r=\frac{\beta+\alpha}{2}.$ Note that since $\det(Az+Bz^{-1})=1,$ and $r$ bounds from above the order of the leading term in $Az+Bz^{-1}$ at $\gamma=\gamma_0,$ it must be non-negative.

If $r=0,$ then for $\gamma=\gamma_0$ no cancellation is possible, since it would further lower the order of $Az+Bz^{-1}$, i.e. $A_0$ and $B_0$ are not proportional and thus $p=[A_0]_{\mathbb{C}^*}$ and $q=[B_0]_{\mathbb{C}^*}$ are distinct points on $Q(\mathbb{C}).$ Moreover, $1=\det(Az+Bz^{-1})=\det(cA_0+c^{-1}B_0)+o(1)$ for all $c\in\mathbb{C}^*$ implying that the line $l$ passing through $p$ and $q$ doesn't lie on $Q(\mathbb{C}).$ The value of $\VAL$ for $\gamma=\gamma_0$ is $\varkappa^\circ(l\backslash\{p,q\}).$ For $\gamma>\gamma_0$ the leading term in the parametrization is $cA_0t^{\alpha+\gamma},$ which gives a ray going all the way from the tip to $p$ at the base at infinity, and the whole circle fiber of $\mathcal{S}.$ Same goes for the case $\gamma<\gamma_0$ with $p$ being replaced by $q.$

If $r>0,$ cancellations are in principle possible, but only when $A_0$ and $B_0$ are proportional. Assume for now the contrary, i.e. that they represent two distinct points $p$ and $q$ on $Q(\mathbb{C})$. Note that the line $l$ passing through these two points now lies on the quadric since $\det(cA_0+c^{-1}B_0)=0$ for all $c\in\mathbb{C}^*$ because $1=\det(Az+Bz^{-1})=\det(cA_0+c^{-1}B_0)t^{2r}+o(t^{2r})$ and $2r>0.$ Thus, at $\gamma=\gamma_0$ and by varying $c$ we get a continuous section $\sigma$ of $\mathcal{S}|_{l\backslash\{p,q\}}$ as the image of $\VAL.$ Again, for $\gamma\neq\gamma_0,$ we get two rays going from $p$ and $q$ until the height $r$ with the whole fiber of $\mathcal{S}$ over each point. 

We are left to study the case when the cancellation occurs. As mentioned, this may happen only if $r>0.$ By rescaling, we may assume that it happened at $z=1,$ in particular the series expansion of matrices $A$ and $B$ starts as $A=A_0t^r+A_1t^{\alpha_1}+o(t^{\alpha_1})$ $B=-A_0t^r+B_1t^{\beta_1}+o(t^{\beta_1}),$ where $r>\alpha_1,\beta_1.$ Write also $z=1+ut^\delta+o(t^\delta),$ where $\delta<0,$ and $z^{-1}=1-ut^\delta+o(t^\delta).$ We have
$$Az+Bz^{-1}=2A_0ut^{r+\delta}+A_1t^{\alpha_1}+B_1t^{\beta_1}+\text{smaller order terms.}$$
Note that since the determinant of this expression is always $1,$ by substituting $u=0$ we get $r_1:=\max(\alpha_1,\beta_1)\geq 0.$ If $r_1=0,$ then we get the same type of expression as (\ref{eq_tangent_0}) in Theorem \ref{thm_tangentlines}, because no further cancellation may happen. If $r_1>0,$ the ray at heights $(r,\infty)$ in the cone picture continues down to the height $r_1,$ where two things may happen. Either there is no cancellation and we get an expression of the type (\ref{eq_tangent_r}), or there is a new cancellation at the top order, and we need to go to a deeper asymptotic expansion. In the latter case, we would have again proportionality of the top matrix to $A_0,$ meaning that the ray continues in a straight way. The procedure will terminate eventually by virtue of working with Puiseux series. 

\begin{thm}
Let $L$ be a line in $\mathbb{K}P^3$ intersecting $Q(\mathbb{K})$in two points. Then $\VAL(L)$ is either \begin{equation}
\{r\}\times\sigma(l\backslash\{p,q\}) \cup (r,\infty)\times\mathcal{S}|_{\{p,q\}}\cup\{\infty\}\times\{p,q\},
    \label{eq_generalline_r}
\end{equation}
where $r>0$, $l$ is a line on $Q(\mathbb{C}),$ $p$ and $q$ are distinct points on $l$ and $\sigma$ is a section of $\mathcal{S}|_{l\backslash\{p,q\}},$ or
\begin{equation}
\{0\}\times\varkappa^\circ(l\backslash \{p,q\})\cup (0,\infty)\times\mathcal{S}|_{\{p,q\}}\cup\{\infty\}\times\{p,q\},
    \label{eq_generalline_0}
\end{equation}
where $l$ is a line in $\mathbb{K}P^3$ intersecting $Q(\mathbb{C})$ at two points $p$ and $q,$ or $\VAL(L)$ is of the form (\ref{eq_tangent_r}) or (\ref{eq_tangent_0}).
    \label{thm_generalline}
\end{thm}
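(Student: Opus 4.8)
The plan is to parametrize $L$ away from its two points on $Q(\mathbb{K})$ by the normalized double cover $z\mapsto[Az+Bz^{-1}]_{\mathbb{K}^*}$ and to read off $\VAL$ pointwise from leading Puiseux data. First I would carry out the normalization already sketched: writing the two intersection points as $[A]_{\mathbb{K}^*},[B]_{\mathbb{K}^*}$ with $\det A=\det B=0$, the polarization $\det(Aw+B)$ is linear with single root $w=0$, so $\det(Aw+B)=uw$; dividing by $\sqrt{uw}$ and relabeling yields the double cover with $\det(Az+Bz^{-1})\equiv 1$. Expanding $A=A_0t^\alpha+\cdots$, $B=B_0t^\beta+\cdots$, $z=ct^\gamma+\cdots$ with $A_0,B_0$ of rank one and $c\in\mathbb{C}^*$, the leading exponent of $Az+Bz^{-1}$ is governed by the tropical polynomial $\gamma\mapsto\max(\alpha+\gamma,\beta-\gamma)$, whose root $\gamma_0=\frac{\beta-\alpha}{2}$ produces the critical height $r=\frac{\beta+\alpha}{2}$. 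Since $\det\equiv 1$ has order zero while $r$ bounds the order of the leading term from above, one gets $r\geq 0$; this invariant decides which normal form occurs.

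Next I would dispose of the two generic cases. When $r=0$, any cancellation of $cA_0+c^{-1}B_0$ at $\gamma=\gamma_0$ would drop the order below zero and violate $\det\equiv 1$, so $A_0,B_0$ are non-proportional, giving distinct points $p,q\in Q(\mathbb{C})$; the identity $\det(cA_0+c^{-1}B_0)=1+o(1)$ shows the line $l$ through them is not on $Q(\mathbb{C})$, and combining the coamoeba of Lemma~\ref{lem_sphcoam} along $\gamma=\gamma_0$ with the ray analysis for $\gamma\neq\gamma_0$ yields form~(\ref{eq_generalline_0}). When $r>0$ and $A_0,B_0$ are still non-proportional, the same computation now gives $\det(cA_0+c^{-1}B_0)=0$, so $l$ lies on $Q(\mathbb{C})$; varying $c$ traces a continuous section $\sigma$ of $\mathcal{S}|_{l\setminus\{p,q\}}$ at height $r$, and the two full fibers over $p,q$ fill the rays, giving form~(\ref{eq_generalline_r}).

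The remaining case, $r>0$ with $A_0\parallel B_0$, is where genuine cancellation forces a recursion, and here I would reduce to the already-proven tangent classification. Rescaling so the cancellation sits at $z=1$, I take $A=A_0t^r+A_1t^{\alpha_1}+\cdots$, $B=-A_0t^r+B_1t^{\beta_1}+\cdots$ with $r>\alpha_1,\beta_1$, and $z=1+ut^\delta+\cdots$ with $\delta<0$; then $Az+Bz^{-1}=2A_0u\,t^{r+\delta}+A_1t^{\alpha_1}+B_1t^{\beta_1}+\cdots$, which is structurally an expansion for a line tangent to $Q(\mathbb{K})$ at $[A_0]_{\mathbb{C}^*}$. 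The term $2A_0u\,t^{r+\delta}$ supplies the vertical ray toward $[A_0]_{\mathbb{C}^*}$, while setting $u=0$ exposes a secondary height $r_1:=\max(\alpha_1,\beta_1)\geq 0$; invoking Theorem~\ref{thm_tangentlines}, I get form~(\ref{eq_tangent_0}) if $r_1=0$ and form~(\ref{eq_tangent_r}) if $r_1>0$ with no further cancellation.

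The main obstacle is precisely controlling repeated cancellation and proving the recursion halts. Should the top order cancel again, the surviving leading matrix is once more proportional to $A_0$, so the ray continues straight downward and I descend to a deeper term, producing a strictly decreasing sequence of heights $r>r_1>r_2>\cdots\geq 0$. I would argue termination from the Puiseux structure: all exponents occurring in $A$ and $B$ lie in $\frac{1}{N}\mathbb{Z}$ for a common denominator $N$, so the heights form a strictly decreasing sequence in a discrete set bounded below by $0$ and must be finite. At the terminal stage no cancellation remains, so Theorem~\ref{thm_tangentlines} supplies form~(\ref{eq_tangent_r}) or~(\ref{eq_tangent_0}), and collecting the outcomes of all cases gives the four alternatives in the statement.
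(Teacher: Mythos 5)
Your proposal is correct and takes essentially the same approach as the paper's proof: the same normalized double cover $z\mapsto[Az+Bz^{-1}]_{\mathbb{K}^*}$ with $\det\equiv 1$, the same tropical polynomial $\max(\alpha+\gamma,\beta-\gamma)$ yielding the trichotomy $r=0$ (form (\ref{eq_generalline_0})), $r>0$ with $A_0,B_0$ non-proportional (form (\ref{eq_generalline_r})), and $r>0$ with cancellation reducing to the tangent-case forms (\ref{eq_tangent_r}) and (\ref{eq_tangent_0}). Your only genuine addition is making the termination of the cancellation recursion explicit via a common denominator $N$ for the Puiseux exponents, a correct elaboration of what the paper compresses into ``the procedure will terminate eventually by virtue of working with Puiseux series.''
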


The only type of lines we haven't considered yet is that of $L$ belonging entirely to $Q(\mathbb{K}).$ One way to deal with it is to think of such a line as being parametrized by $[v_1 \cdot v_2^\top]_{\mathbb{K}^*},$ where $v_1$ and $v_2$ are two nonzero vectors, one of which is fixed and the other varies freely. The corresponding image under $\VAL$ will be a line of the same type but now on $Q(\mathbb{C})$ simply by taking the top degree part of the fixed vector -- the top degree part of the second vector will again vary freely.

As a final remark for this section, we emphasize that the computations above should be doable whenever there is an explicit parametrization of a curve, such as when the curve is rational. It will certainly be harder, the case of conics already looks intimidating. However, we expect to give a complete description of $\VAL$ images for all rational curves. Higher genus case, on the other hand, at the moment looks virtually impossible to handle in full generality.
\vspace{-5pt}
\section{The case of surfaces}
In this section, we aim to find restrictions on the image under $\VAL$ for some surfaces in every degree as a preparation for the following section where we will see that no line belongs to any of them (assuming that the input complex polynomials $f_k$ are generic) starting with degree $4$. By the nature of the setup, there is a slight variation between even and odd degrees reflected in the odd degree by the presence of a spherical coamoeba of a generic plane over the tip of the cone. At this moment, we know what is its shape -- a complement to a cylinder; this fact, however, is not relevant to the present article and will be proven elsewhere.

The image under $\VAL$ will be decomposed into the following parts: $\Sigma_0$ -- a part over the tip of the cone, which will be empty in our examples of even degree; $\Sigma_\infty$ -- a part at the infinite level, i.e. at the base of the cone, consisting of the intersection of a hypersurface and the quadric; $\Sigma_R$ -- the biggest part consisting of regular values, each of its slice at a given height will be consisting of the total space of the circle bundle $\mathcal{S}$ restricted to a symmetric bidegree curve on the quadric; $\Sigma_C$ -- the union of slices at critical levels each of which is an image of a section of $\mathcal{S}$ defined in the complement of a union of the two curves to the sides of the critical level, together with the whole fiber at their intersection.

A (phase) tropicalization can be defined as the closure of an image under (phase) valuation of a variety -- note that the images under the valuation and coamoeba map generally are not closed, in contrast with usual (Archimedean) amoebas which are always closed. We believe that the examples described below are actually typical, i.e. a very specific choice of the form of a polynomial is not crucial -- allowing perhaps, obvious degeneration of critical levels. Moreover, analogously to \cite{kerr2018phase} we, expect that in the generic case the corresponding closure is a topological manifold homeomorphic to a smooth complex surface of the corresponding degree.

Here, on the other hand, we emphasize that we are not going to take the closure, keeping the gaps at the attachment of regular and critical levels. This will be crucial in observing no lines in the next section. 

\begin{thm}
    Let $n$ be a natural number and $S_{2n}$ be a surface in $\mathbb{K}P^3$ of degree $2n$ given by homogeneous equation $F_{2n}=0,$ where $$F_{2n}(A)=\sum_{j=0}^n t^{-j(j+1)}(\det(A))^{n-j}f_{2j}(A),$$ where $f_{2j}$ is a degree $2j$ homogeneous polynomial with complex coefficients not divisible by $\det$ considered as a degree $2$ polynomial. Denote by $C_{2j}$ a complex curve on $Q(\mathbb{C})$ defined by the restriction of $f_{2j}.$  Then $$\VAL(S_{2n})\subset \Sigma_\infty\cup\Sigma_R\cup\Sigma_C,$$ where $\Sigma_\infty=\{\infty\}\times C_{2n},$ $\Sigma_R=\cup_{j=1}^{n-1}(j,j+1)\times\mathcal{S}|_{C_{2j}}\cup (n,\infty)\times\mathcal{S}|_{C_{2n}}$ and $\Sigma_C=\cup_{j=1}^n\{j\}\times(\sigma_j(Q(\mathbb{C})\backslash (C_{2j-2}\cup C_{2j}))\cup \mathcal{S}|_{C_{2j-2}\cap C_{2j}})$ and $\sigma_j[B]_{\mathbb{C}^*}=[\sqrt{-\frac{f_{2j-2}(B)}{f_{2j}(B)}}B]_{\mathbb{R}^*}$ is a section of $\mathcal{S}|_{Q(\mathbb{C})\backslash (C_{2j-2}\cup C_{2j})}.$
    \label{thm_evendegree}
\end{thm}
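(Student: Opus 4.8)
The plan is to take an arbitrary point $[A]_{\mathbb{K}^*}\in S_{2n}$, write $A=A_0t^\alpha+o(t^\alpha)$ with $A_0$ a nonzero complex matrix, and read off $\VAL([A]_{\mathbb{K}^*})$ from the leading pair $(\alpha,A_0)$ via the cone picture, separating the case $[A]_{\mathbb{K}^*}\in Q(\mathbb{K})$ from the rest. If $[A]_{\mathbb{K}^*}\notin Q(\mathbb{K})$, then $\det A\in\mathbb{K}^*$ has a square root in the Puiseux field, so replacing $A$ by $A/\sqrt{\det A}$ (which alters neither the class nor the vanishing of the homogeneous $F_{2n}$) lets me assume $\det A=1$. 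Since then $(\det A)^{n-j}=1$, the defining equation collapses to the far friendlier $\sum_{j=0}^n t^{-j(j+1)}f_{2j}(A)=0$, while the height $\alpha\ge 0$ together with $[A_0]_{\mathbb{R}^*}$ is exactly the data returned by $\VAL$ for interior points (and $\alpha>0$ forces $\det A_0=0$, so the base $[A_0]_{\mathbb{C}^*}$ lies on $Q(\mathbb{C})$).

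The combinatorial heart is a one-variable tropical (Newton polygon) analysis in the summation index $j$. Assuming $f_{2j}(A_0)\ne 0$, the $j$-th summand has order $g_j:=2j\alpha-j(j+1)$ in $t$, and the key computation is the first difference $g_j-g_{j-1}=2(\alpha-j)$. Thus $j\mapsto g_j$ increases while $j<\alpha$ and decreases while $j>\alpha$, so for non-integer $\alpha\in(j,j+1)$ with $j\le n-1$, or for $\alpha>n$, there is a \emph{unique} dominant index ($j$, respectively $n$), whereas for integer $\alpha=j\in\{1,\dots,n\}$ exactly the two neighbouring indices $j-1$ and $j$ tie at the top order $g_{j-1}=g_j=j(j-1)$. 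I would also record separately that on $[0,1)$ the constant term $f_0\ne 0$ (nonzero as it is ``not divisible by $\det$'') strictly dominates and cannot cancel, which is what leaves $\Sigma_0$ empty and excludes the first slab.

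From here the three regions drop out through the single principle that the top-order coefficient of $F_{2n}(A)$ must vanish. At a regular height $\alpha\in(j,j+1)$ the unique dominant term forces $f_{2j}(A_0)=0$, i.e. $[A_0]_{\mathbb{C}^*}\in C_{2j}$, whence $\VAL([A]_{\mathbb{K}^*})=(\alpha,[A_0]_{\mathbb{R}^*})\in(j,j+1)\times\mathcal{S}|_{C_{2j}}$; the slab $\alpha>n$ is identical with $C_{2n}$, together giving $\Sigma_R$. For a point on $Q(\mathbb{K})$ every summand with $j<n$ carries the factor $(\det A)^{n-j}\equiv 0$, so the equation reduces to $f_{2n}(A)=0$, placing the base in $C_{2n}$ at infinite height, i.e. in $\Sigma_\infty$.

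The subtle step, and the one I expect to be the main obstacle, is the phase bookkeeping at the critical heights $\alpha=j$, which produces the section $\sigma_j$. Here vanishing of the top-order coefficient reads $f_{2j-2}(A_0)+f_{2j}(A_0)=0$. Writing $A_0=cB$ for a chosen representative $B$ of $[A_0]_{\mathbb{C}^*}$ and using homogeneity turns this into $c^{2j-2}f_{2j-2}(B)+c^{2j}f_{2j}(B)=0$. When $[B]\notin C_{2j-2}\cup C_{2j}$ this pins the phase to $c^2=-f_{2j-2}(B)/f_{2j}(B)$, so $[A_0]_{\mathbb{R}^*}=[cB]_{\mathbb{R}^*}=\sigma_j([B])$, the sign ambiguity of the square root being absorbed by the real rescaling in $[\cdot]_{\mathbb{R}^*}$; when $[B]\in C_{2j-2}\cap C_{2j}$ both coefficients vanish, the condition is vacuous, and the entire fiber $\mathcal{S}|_{[B]}$ is allowed; the mixed case, with $[B]$ on exactly one of the two curves, is impossible since then the surviving coefficient cannot cancel. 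This yields precisely $\{j\}\times(\sigma_j(Q(\mathbb{C})\setminus(C_{2j-2}\cup C_{2j}))\cup\mathcal{S}|_{C_{2j-2}\cap C_{2j}})\subset\Sigma_C$. The care required is to confirm that no summand other than the two neighbours attains the critical order $g_{j-1}=g_j$, and that subleading terms, which further constrain $A$ but not its leading pair $(\alpha,[A_0]_{\mathbb{R}^*})$, do not affect the value of $\VAL$.
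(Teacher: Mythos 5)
Your proposal is correct and takes essentially the same route as the paper's proof: normalize $\det A=1$ off the quadric so that $F_{2n}(A)$ collapses to $\sum_{j=0}^n t^{-j(j+1)}f_{2j}(A)$, run the one-variable tropical dominance analysis in the index $j$ (the paper phrases this as the tropical polynomial $\max_{j}\bigl(-j(j+1)+2j\beta\bigr)$ having roots at $1,\dots,n$, which is exactly your first-difference/concavity computation $g_j-g_{j-1}=2(\alpha-j)$), and at integer heights extract the same inhomogeneous equation $f_{2j-2}(B)+f_{2j}(B)=0$ with the identical three-way case split producing $\sigma_j$, the full fiber over $C_{2j-2}\cap C_{2j}$, and the contradiction in the mixed case. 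Your phase bookkeeping, the exclusion of heights in $[0,1)$ via $f_0\neq 0$, and the reduction to $f_{2n}(A)=0$ on $Q(\mathbb{K})$ all match the paper's argument step for step.
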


\begin{proof}
Take $A=t^\beta B+o(t^\beta)$ such that $F_{2n}(A)=0.$ Assume first $\det(A)=0.$ This immediately implies $f_{2n}(A)=0.$ From the last two equations we get two conditions on $B:$\ $\det(B)=0$ and $f_{2n}(B)=0.$ Therefore, $\VAL(A)\in\Sigma_\infty.$

If $A$ is invertible, normalize it to have $\det(A)=1.$ Observe again that this condition implies $\beta\geq 0.$ However, if $\beta<1,$ the first term  $\det(A)^nf_0(A)$ will dominate all others. This term, is, however, nonzero, therefore such a situation is not possible. In particular, $\VAL(S_{2n})$ has no points at heights in $[0,1).$  

More generally, substitution in $F_{2n}(A)$ of all determinants with $1,$ results in an inhomogeneous polynomial that we further expand as a series in $t.$ 
The expected valuation of a top order term in $F_{2n}(A)$ is given by the tropical polynomial $\max_{j=0}^n(-j(j+1)+2j\beta),$ which has roots at $1,\dots,n.$ If $\beta$ is not one of these roots, $F_{2n}(A)$ has exactly one top order term $t^{-j(j+1)+2j\beta}f_{2j}(B)$ which must vanish. This implies that $[B]_{\mathbb{C}^*}$ belongs to $C_m$ and $\VAL(A)\in \Sigma_R.$   

In the situation when $j=\beta\in\{1,\dots,n\},$ there are two competing terms in $F_{2n}(A),$ the sum of which must vanish. We get an inhomogeneous equation $f_{2j-2}(B)+f_{2j}(B)=0$ on $B.$ If both $f_{2j-2}(B)$ and $f_{2j}(B)$ do not vanish (i.e. if $[B]_{\mathbb{C}^*}\in Q(\mathbb{C})\backslash (C_{2j-2}\cup C_{2j})$), this equation has a solution $cB\in[B]_{\mathbb{C}^*}$ for some $c\in\mathbb{C}^*.$ Explicitly, by substituting $cB$ into the inhomogeneous equation we get $c=\sqrt{-\frac{f_{2j-2}(B)}{f_{2j}(B)}}$ and $[B]_{\mathbb{C}^*}\mapsto [cB]_{\mathbb{R}^*}$ defines the section $\sigma_j$ in the statement. That gives a point of $\Sigma_C.$

Finally, the case when one of $f_{2j-2}(B)$ or $f_{2j}(B)$ vanishes but the other does not, contradicts the assumption $F_{2n}(A)=0.$ If both vanish, $[B]_{\mathbb{C}^*}\in C_{2j}\cup C_{2j-2}$ and we do not put any additional restriction on $B,$ landing under $\VAL$ to the fiber of $\mathcal{S}$ over $[B]_{\mathbb{C}^*}$
\end{proof}
    
\begin{thm}
    Let $n$ be a natural number and $S_{2n+1}$ be a surface in $\mathbb{K}P^3$ of degree $2n+1$ given by homogeneous equation $F_{2n+1}=0,$ where $$F_{2n+1}(A)=\sum_{j=0}^n t^{-j(j+1)}(\det(A))^{n-j}f_{2j+1}(A),$$ where $f_{2j+1}$ is a degree $2j+1$ homogeneous polynomial with complex coefficients not divisible by $\det$ considered as a degree $2$ polynomial. Denote by $C_{2j+1}$ a complex curve on $Q(\mathbb{C})$ defined by the restriction of $f_{2j+1}.$  Then $$\VAL(S_{2n+1})\subset \Sigma_\infty\cup\Sigma_R\cup\Sigma_C\cup\Sigma_0,$$ where $\Sigma_\infty=\{\infty\}\times C_{2n+1},$ $\Sigma_R=\cup_{j=0}^{n-1}(j,j+1)\times\mathcal{S}|_{C_{2j+1}}\cup (n,\infty)\times\mathcal{S}|_{C_{2n+1}}$ and $\Sigma_C=\cup_{j=1}^n\{j\}\times(\sigma_j(Q(\mathbb{C})\backslash (C_{2j-1}\cup C_{2j+1}))\cup \mathcal{S}|_{C_{2j-1}\cap C_{2j+1}})$ and $\sigma_j[B]_{\mathbb{C}^*}=[\sqrt{-\frac{f_{2j-1}(B)}{f_{2j+1}(B)}}B]_{\mathbb{R}^*}$ is a section of $\mathcal{S}|_{Q(\mathbb{C})\backslash (C_{2j-1}\cup C_{2j+1})},$ and $\Sigma_0=\{0\}\times \varkappa^\circ(H),$ where $H$ is the intersection with $PSL_2(\mathbb{C})$ of a plane in $\mathbb{C}P^3$ defined by $f_1=0.$
    
    \label{thm_odddegree}
\end{thm}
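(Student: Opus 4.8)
The plan is to mirror the structure of the proof of Theorem \ref{thm_evendegree} for the even-degree case, adapting each step to the odd-degree polynomial $F_{2n+1}$, and then to treat the one genuinely new phenomenon---the appearance of $\Sigma_0$ over the tip of the cone---separately. I would begin by taking a Puiseux series $A=t^\beta B+o(t^\beta)$ with $F_{2n+1}(A)=0$ and first dispose of the point at infinity: if $\det(A)=0$, then the determinant factors in every summand except $j=n$ vanish, forcing $f_{2n+1}(A)=0$, and taking top-order terms gives $\det(B)=0$ together with $f_{2n+1}(B)=0$, so $[B]_{\mathbb{C}^*}\in C_{2n+1}$ and $\VAL(A)\in\Sigma_\infty$. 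For invertible $A$ I normalize $\det(A)=1$, which again forces $\beta\geq 0$, and expand $F_{2n+1}(A)$ as a series in $t$ after substituting all determinants by $1$.

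The core of the argument is again the tropical polynomial governing the valuation of the top-order term. Here the relevant exponents are $-j(j+1)+(2j+1)\beta$ for $j=0,\dots,n$, and I would verify that the maximum $\max_{j=0}^{n}\bigl(-j(j+1)+(2j+1)\beta\bigr)$ has breaking points exactly at the integers $\beta=1,\dots,n$ (the consecutive exponents $-j(j+1)+(2j+1)\beta$ and $-(j+1)(j+2)+(2j+3)\beta$ coincide precisely at $\beta=j+1$). For $\beta$ not an integer in this range, a single term $t^{-j(j+1)+(2j+1)\beta}f_{2j+1}(B)$ dominates and must vanish, placing $[B]_{\mathbb{C}^*}$ on the appropriate odd curve $C_{2j+1}$ and giving a point of $\Sigma_R$; for $\beta=j\in\{1,\dots,n\}$ two consecutive terms compete, yielding the inhomogeneous equation $f_{2j-1}(B)+f_{2j+1}(B)=0$, whose solution $c=\sqrt{-f_{2j-1}(B)/f_{2j+1}(B)}$ defines the section $\sigma_j$ exactly as before, producing $\Sigma_C$, with the fiber $\mathcal{S}|_{C_{2j-1}\cap C_{2j+1}}$ appearing when both forms vanish.

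The genuinely new step, and the one I expect to be the main obstacle, is the analysis at $\beta=0$, which in the odd case does \emph{not} lead to a contradiction as it did for even degree. When $\beta=0$ the leading exponent is achieved only at $j=0$, so the top-order condition is $f_1(B)=0$ with $\det(B)=1$; thus $[B]_{\mathbb{C}^*}$ lies in the plane $\{f_1=0\}$ intersected with $PSL_2(\mathbb{C})$, i.e. on $H$. The subtlety is that at height zero we land over the tip of the cone, where $\VAL$ is computed not by the section formula but via the spherical coamoeba map $\varkappa^\circ$ of Lemma \ref{lem_sphcoam}, exactly as in the $\beta=0$ case of Theorem \ref{thm_tangentlines}. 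I would argue that the limit of $\tilde R_{(\log t)^{-1}}$ applied to such an $A$ is $\varkappa^\circ(B)$ by Lemma \ref{lem_tropofpoint}, so that the contribution is $\{0\}\times\varkappa^\circ(H)=\Sigma_0$.

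The remaining care is to check that nothing forces $\beta$ strictly between $0$ and $1$: since the $j=0$ term $f_1(A)$ has exponent $\beta$ and all higher terms carry strictly negative determinant-exponents that cannot compensate until $\beta=1$, the only way to kill the $j=0$ term for $0<\beta<1$ would again require $f_1(B)=0$ while a strictly lower term survives alone, which is impossible, so no points appear at heights in $(0,1)$. Assembling the four cases $\beta=\infty$, $\beta\in(j,j+1)$, $\beta=j$, and $\beta=0$ yields precisely the claimed inclusion $\VAL(S_{2n+1})\subset\Sigma_\infty\cup\Sigma_R\cup\Sigma_C\cup\Sigma_0$.
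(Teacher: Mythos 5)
Your overall architecture matches the paper's proof, which itself is laconic: it declares the argument analogous to that of Theorem \ref{thm_evendegree} and only spells out the new $\beta=0$ case, where the leading term of $F_{2n+1}(A)$ is $f_1(B)(\det(A))^n=f_1(B)$, whose vanishing defines $H$ and places $\VAL(A)$ in $\Sigma_0$. Your treatment of $\det(A)=0$ (giving $\Sigma_\infty$), your verification that the breaking points of $\max_{j}\bigl(-j(j+1)+(2j+1)\beta\bigr)$ sit exactly at $\beta=1,\dots,n$, your derivation of the section $\sigma_j$ at the critical levels, and your use of Lemma \ref{lem_tropofpoint} (equivalently Lemma \ref{lem_sphcoam}) at $\beta=0$ are all correct and consistent with the paper.

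Your final paragraph, however, is wrong, and it contradicts both the statement of the theorem and your own middle paragraph. For $0<\beta<1$ the maximum of the exponents is attained at $j=0$ alone (indeed $\beta>-2+3\beta$ precisely when $\beta<1$), so the unique top-order term is $t^{\beta}f_1(B)$ and $F_{2n+1}(A)=0$ forces $f_1(B)=0$; combined with $\det(B)=0$, which follows from $\det(A)=1$ and $\beta>0$, this says $[B]_{\mathbb{C}^*}\in C_1$ and hence $\VAL(A)\in(0,1)\times\mathcal{S}|_{C_1}$. This is exactly the $j=0$ piece of $\Sigma_R$ --- note that in the odd-degree theorem the union defining $\Sigma_R$ starts at $j=0$, unlike the even case. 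There is nothing ``impossible'' here: $f_1(B)=0$ only kills the initial term of $f_1(A)$, and its lower-order terms can perfectly well cancel against $t^{-2}(\det A)^{n-1}f_3(A)$ and deeper summands, so such points genuinely occur on $\VAL(S_{2n+1})$. The exclusion of heights in $[0,1)$ in the even case rests on $f_0$ being a nonzero \emph{constant}, and this argument has no analogue once the lowest summand is the linear form $f_1$. Because the theorem asserts only an inclusion, simply deleting your last paragraph leaves a complete and correct proof (heights in $(0,1)$ are already covered by your non-integer-$\beta$ case); but as written, the paragraph asserts a false statement which, if believed, would lead one to wrongly discard the $(0,1)\times\mathcal{S}|_{C_1}$ component of $\Sigma_R$.
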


\begin{proof}
    The argument is quite analogous to the even degree case, except for the situation when $\beta=0$ in the asymptotic expansion $A=Bt^\beta+o(t^\beta).$ Under the assumption $\det(A)=1$ this implies $\det(B)=1$ and by substituting such $A$ to $F_{2n+1},$ its leading term is $f_1(B)(\det(A))^n=f_1(B)$ the vanishing of which defines $H,$ an intersection of a plane with $PSL_2(\mathbb{C}).$ Clearly, $\VAL(A)\in\Sigma_0.$
\end{proof}

We expect that the inclusions in Theorems \ref{thm_evendegree} and \ref{thm_odddegree} are actually equalities. Moreover, allowing critical levels to vary and be multiple (in which case the sections would be multi-sections), as well as permitting for higher degree coamoebas above the tip of the cone in both even and odd cases, the description of the $\VAL$ images of surfaces is likely to be complete. 

\section{Lines on surfaces}
In what follows, we combine the results of the previous two sections to illustrate how accurate is $PSL_2$ phase tropicalization in describing the relation between generic surfaces and lines in $\mathbb{P}^3.$ 
As it was explained, the ordinary phase-forgetting $PSL_2$ tropicalization is hopeless in capturing this relation, but recovering the phase helps to restore the most characteristic elements of the picture.

First, observe that $\VAL(S_1)$ admits a two-dimensional family of $\VAL$-images of lines. If $f_1$ is generic, then $C_1$ is an irreducible $(1,1)$-curve, in which case only types (\ref{eq_tangent_0}) and (\ref{eq_generalline_0}) can fit to $\VAL(S_1),$ since (\ref{eq_tangent_r}) and (\ref{eq_generalline_r}) have a section over either $(1,0)$-curve or $(0,1)$-curve at a non-zero level. The first type is parametrized by a single (double) point on $C_1$ and the second is parametrized by two distinct points on the same curve.  
Second, $\VAL(S_2)$ has one critical level at height $1,$ being empty for heights less than one, which implies that types (\ref{eq_tangent_0}) and (\ref{eq_generalline_0}) cannot belong to it. After height 1, it is spanned by the total space of $\mathcal{S}$ over the curve $C_2.$ Assuming $C_2$ is irreducible, prohibits the types \ref{eq_tangent_r} and \ref{eq_generalline_r} if $r>1.$ Thus, $r$ can be only equal to $1$ and we get a one-dimensional family of lines consisting of two disjoint components.

Third, $\VAL(S_3)$ also has a single critical level. We will show now that every type of a $\VAL$ image of a line is rigid on $\VAL(S_3)$ provided that $f_1$ and $f_3$ are generic. The types (\ref{eq_tangent_0}) and (\ref{eq_generalline_0}) are rigid since they cannot be pushed from the tip of the cone by the irreducibility of $C_1$ and their rays must be based at the finite intersection of $C_1$ and $C_3.$ For the types \ref{eq_tangent_r} and \ref{eq_generalline_r} again $r$ must be equal to $1.$ Moreover, at each intersection of the line $l$ with $C_3\backslash C_1$ there must be a gap (where the section $\sigma_1$ is not defined) where a ray is attached. Note that in general, there are three such intersections of $l$ and $C_3$, but we may have at most two rays on the image of $\VAL$ for a line. Thus, the only possibility is that $l$ passes through one of the $6$ intersection points of $C_1$ and $C_3,$ which together with the choice of the bidegree (either $(0,1)$ or $(1,0)$) completely determines $l.$

As for higher degrees of surfaces, we have the following.

\begin{thm}
    Let $d$ be a natural number greater than $3$ and let $S_d$ be a degree $d$ surface defined in either in Theorem \ref{thm_evendegree} or \ref{thm_odddegree} such that all the polynomials $f_j$ are generic. Then, there exists no line $L$ on $\mathbb{K}P^3$ such that $\VAL(L)\subset\VAL(S_d).$  
    \label{thm_noline}
\end{thm}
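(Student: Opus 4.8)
The plan is to show that for every one of the four possible $\VAL$-image types of a line---the types labeled (\ref{eq_tangent_r}), (\ref{eq_tangent_0}), (\ref{eq_generalline_r}), (\ref{eq_generalline_0}) in Theorems \ref{thm_tangentlines} and \ref{thm_generalline}---the geometric constraints imposed by being contained in $\VAL(S_d)$ (as computed in Theorems \ref{thm_evendegree} and \ref{thm_odddegree}) cannot all be satisfied once $d>3$. The strategy is to exploit the mismatch between the \emph{number of rays} a line's image can carry (at most two, landing over points $p,q\in Q(\mathbb{C})$) and the number of ray-attachment sites forced by the surface (the intersection points of $l$ with the various curves $C_m$). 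I would organize the argument as a case analysis over the two types that live over a nonzero level ((\ref{eq_tangent_r}) and (\ref{eq_generalline_r})) and the two that touch the tip ((\ref{eq_tangent_0}) and (\ref{eq_generalline_0})), treating even and odd $d$ together since the structure of $\Sigma_R\cup\Sigma_C$ is parallel.

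\textbf{Level and base constraints.} First I would record the rigidity already observed for $d=3$ and push it further. For a line image of type (\ref{eq_generalline_r}) or (\ref{eq_tangent_r}) sitting at height $r$, the section $\sigma(l\setminus\{p,q\})$ must coincide with a slice of $\VAL(S_d)$ at that height. The only slices available are the critical slices $\{j\}\times\sigma_j(\dots)$ at integer heights $j\in\{1,\dots,n\}$ (carrying sections over all of $Q(\mathbb{C})$ minus two curves) and the regular slices at non-integer heights (carrying the \emph{entire} fiber $\mathcal{S}|_{C_m}$ over a single curve). A line's critical slice is a section over a line $l\subset Q(\mathbb{C})$, i.e. a $(1,0)$- or $(0,1)$-curve, not the full circle bundle, so it can only match a surface's critical slice; hence $r=j$ for some integer $j$, and the line $l$ must be a component of neither $C_{2j-2}$ nor $C_{2j}$ (resp.\ the odd-degree curves), while the gaps of $\sigma_j$ (the loci $C_{2j-2}\cup C_{2j}$) must contain exactly the two base points $p,q$ where $l$'s rays attach.

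\textbf{The counting contradiction.} This is the heart of the matter and the step I expect to be the main obstacle to make airtight. A generic line $l$ on the quadric---a $(1,0)$- or $(0,1)$-ruling---meets the bidegree $(j,j)$ curve $C_{2j}$ (or $(2j{+}1,2j{+}1)$ curve in the odd case) in $j$ points (resp.\ $2j+1$), and meets $C_{2j-2}$ in $j-1$ points (resp.\ $2j-1$). Since the section $\sigma_j$ is undefined precisely on $C_{2j-2}\cup C_{2j}$, every one of these intersection points is a gap where the continuous section breaks, hence forces either a ray-attachment or a failure to fit. But a line image has at most two rays, so it can absorb at most two gaps. For $d>3$ one has, on the relevant ruling, at least $j+(j-1)=2j-1\ge 3$ forced gaps as soon as $j\ge 2$, and for $j=1$ the count on the \emph{other} ruling or the genericity-forced transversality of $C_1$ and $C_3,\dots$ rules out a consistent placement---this is exactly where $d=3$ was borderline (three intersections, two rays, saved only by routing $l$ through a $C_1\cap C_{2n-1}$ point) and where $d\ge 4$ breaks, since the extra curve $C_{2j\pm 2}$ in the chain introduces a second independent gap locus that cannot be simultaneously reconciled with the at-most-two-ray budget. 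I would make this precise by checking that for each admissible $j$ the number of gaps strictly exceeds two, using Bézout on $Q(\mathbb{C})\cong\mathbb{P}^1\times\mathbb{P}^1$ together with genericity of the $f_m$ to guarantee transversality and to forbid $l$ from lying in any $C_m$.

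\textbf{The tip cases.} Finally, for types (\ref{eq_tangent_0}) and (\ref{eq_generalline_0}), which require a slice over the tip ($\{0\}\times\varkappa^\circ(\cdots)$), I would argue that in the even degree $\VAL(S_{2n})$ has no $\Sigma_0$ component at all (the image is empty for heights in $[0,1)$, as shown in Theorem \ref{thm_evendegree}), so these types are immediately excluded; in the odd case $\Sigma_0=\{0\}\times\varkappa^\circ(H)$ is a \emph{plane} section $f_1=0$, and the coamoeba $\varkappa^\circ(l\setminus\{p,q\})$ of a line can embed in it only if $l$ lies on that plane, after which the same ray-counting against the higher curves $C_3,\dots,C_{2n+1}$ along the body of the cone produces more than two forced attachment sites once $d\ge 5$; the degree-$4$ subcase reduces to the even analysis. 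The hard part throughout is the rigorous gap-count---ensuring that each curve intersection genuinely forces a distinct ray and that no degenerate coincidence (which genericity of the $f_j$ must preclude) lets a single ray cover two gaps or lets a section silently extend across $C_m$.
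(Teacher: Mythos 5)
Your overall strategy is the paper's: match the line's critical slice to a critical slice of $\VAL(S_d)$ (forcing $r\in\mathbb{Z}$ via irreducibility of the $C_m$), then play the at-most-two-ray budget against the gaps of $\sigma_j$ along $l$, using genericity on $Q(\mathbb{C})\cong\mathbb{C}P^1\times\mathbb{C}P^1$. But as written the counting step fails, for two reasons. First, the arithmetic: $C_{2j}$ is cut on the quadric by a degree-$2j$ form, so it has bidegree $(2j,2j)$ and a ruling line meets it in $2j$ points, not $j$; your threshold ``$j+(j-1)=2j-1\ge 3$ for $j\ge 2$'' is based on the wrong bidegrees, and it misplaces where the argument is actually tight. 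The genuinely critical case is the lowest even critical level $j=1$: there $f_0$ is a nonzero constant, so $C_0=\emptyset$, $\sigma_1$ is a section over $Q(\mathbb{C})\setminus C_2$, and $l$ meets $C_2$ in exactly \emph{two} points --- exactly the ray budget, so no contradiction can come from counting gaps at that level. Your fallback remarks do not repair this: $l$ is a single ruling line (there is no ``other ruling'' to count on), and $C_1,C_3$ do not exist in even degree.

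The missing idea that closes this case (and that the paper uses throughout) is the \emph{vertical} constraint on rays: a ray $(r,\infty)\times\mathcal{S}|_{\{p\}}$ lies in $\VAL(S_d)$ only if over every regular band above $r$ the whole fiber over $p$ sits in $\mathcal{S}|_{C_m}$, i.e.\ $p$ must lie on \emph{every} curve governing a band the ray crosses. This (i) disposes of types (\ref{eq_tangent_0}) and (\ref{eq_generalline_0}) cleanly --- in even degree because $\VAL(S_{2n})$ is empty at heights in $[0,1)$, in odd degree $d\ge 5$ because $p$ would lie on three generic curves, whose common intersection is empty; (ii) restricts $r$ to $n$ or $n-1$, rather than all integer levels; and (iii) in the case $r=n-1$ forces the ray endpoints into the finite set $C_d\cap C_{d-2}$, whence genericity (no ruling line through two such points) allows at most one ray, while $l$ meets $C_{d-2}$ transversally in $d-2\ge 2$ points away from $C_{d-4}$, leaving at least one gap with no ray attached --- the contradiction. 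Without this constraint your per-level gap count cannot exclude $r<n$. Finally, the tangency worry you flag at $r=n$ (a single intersection of $l$ with $C_d$ of multiplicity two swallowing a gap) is resolved exactly as in the paper: genericity makes the projection of $C_d$ to a ruling factor simply ramified, with ramification points having distinct images and avoiding the image of $C_{d-2}\cap C_d$, so even with one tangency $l$ retains at least three gap points on $(C_d\cup C_{d-2})\setminus(C_d\cap C_{d-2})$. You correctly identified this as the step needing rigor, but the proposal leaves it, and the ray-penetration mechanism, unproved.
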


\begin{proof}
Let's start with types (\ref{eq_tangent_0}) and (\ref{eq_generalline_0}). Note that $\VAL(S_d)$ has at least two critical levels, and the rays in the images of lines would need to penetrate through them, passing through the intersection of three generic curves on $Q(\mathbb{C}),$ which is empty.

As for other types, their critical level $r$ must be at the critical level of $\VAL(S_d)$ since all the curves $C_j$ are irreducible by the genericity assumption. By the previous argument, $r$ can be only $n$ or $n-1,$ where $n$ is the rounding down of $\frac{d}{2},$ because the rays should penetrate through all the corresponding regular levels.

Assume first $r=n.$ Then, $l$ must have at most two intersection points with $(C_d\cup C_{d-2})\backslash (C_d\cap C_{d-2})$ where the rays could be attached. However, by genericity of $C_d$ its projection to one of the factors of $Q(\mathbb{C})\cong\mathbb{C}P^1\times\mathbb{C}P^1$ has only a simple ramification profile of order two, with ramification points having different images. Moreover, by genericity of $C_{d-2},$ this ramification points are distinct from the images of the intersection of $C_{d-2}$ with $C_d.$ Thus, a line $l$ may have an intersection point of multiplicity two with $C_d,$ but then all other intersections are transverse (with their total number equal $d-2$) and are all away from $C_{d-2},$ giving at least $3$ intersections of $l$ with $(C_d\cup C_{d-2})\backslash (C_d\cap C_{d-2}).$

Finally, assume $r=n-1.$ Then, the rays must penetrate through two regular levels and their endpoints must be at the intersection of two curves $C_d$ and $C_{d-2}.$ Moreover, neither of these intersections belongs to the same line $l$ by genericity. Thus $\VAL(L),$ if it belongs to $\VAL(S_d)$ must have a single ray, and $l$ must pass through its endpoint. On the other hand, again by genericity, $l$ will not pass through $C_{d-2}\cap C_{d-4},$ so it will have $d-2-1\geq 1$ additional gaps since all the intersections of $l$ with $C_{d-2}$ are transverse. A contradiction.
\end{proof}

Here we once more focus attention on the importance of not taking the closure of $\VAL(S_d),$ since otherwise we would lose the gap argument we used heavily above. Observe that in the geometric version of tropicalization, where we would take the Hausdorff (or Kuratowski) limit of a family, the result would be automatically close. It is a curious problem if the gaps could be restored in this setup.  

\section{Perspectives}
We emphasize that our point is not reproving the well known fact of no-lines on generic degree$\geq 4$ surfaces, but rather showing the consistency of $PSL_2$ phase tropical geometry with the classical complex geometry at this basic level, something which is not there if the phase would be forgotten (the difference is less dramatic in the case of the usual tropicalization). Starting in that way there is a possibility for further exploration and adjustment. Notably, it might be helpful to further extend our tropicalization procedure to solve more elaborate questions, such as enumerative problems. 

One such possibility for further refinement would be redefining $\VAL$ as $t^\alpha B+o(t^\alpha)\rightarrow (\alpha,[B]_{\{\pm 1\}})$ which would mean that instead of circle bundle $\mathcal{S}$ we would be dealing with a $\mathbb{C}^*$-bundle in a way more natural for complex geometry. It also might happen that for particular tasks forgetting the bundle altogether is enough, which is certainly true for the purposes of basic intersection theory. Such potential adjustments are aligned with the question posed by Oleg Viro. Namely, what kind of hyperfield-type structure on matrices produces shapes appearing in Theorems \ref{thm_evendegree} and \ref{thm_odddegree}? 

Of course, there is still much that remains unexplored. Remarks and suggestions would be much welcome. As was mentioned, we expect that the inclusions of Theorems \ref{thm_evendegree} and \ref{thm_odddegree} above are actually equalities. 


\end{document}